\documentclass[11 pt, reqno]{amsart}
\usepackage{bbm}
\usepackage{graphics}
 \usepackage{setspace}
\onehalfspacing
\usepackage{url}
\usepackage[utf8]{inputenc}
\usepackage{chngcntr}
\usepackage{lipsum}
\usepackage{amsthm}
\usepackage{amsfonts}
	\usepackage{cases}
\usepackage{color}
\usepackage{geometry}
\geometry{hmargin=3.0cm,vmargin=2.9cm}
\usepackage{amsmath}
\usepackage{amssymb}
\usepackage{latexsym}
\usepackage{amssymb}
\usepackage{mathrsfs}
\usepackage{amsmath}
\usepackage{enumerate}
\usepackage{hyperref}
\hypersetup{
    colorlinks=true,                          
    linkcolor=blue, % Couleur des liens internes
    citecolor=red, % Couleur des numéros de la biblio dans le corps
    urlcolor=blue  } % Couleur des url

\usepackage{eurosym}
	\usepackage{cases}
\usepackage{color}
%%%%%%%%%%%%

%      \kom for comments that can easily be deleted.
%
\newcommand{\kom}[1]{}
%
%     Comment these lines out in versions for others.
%
\renewcommand{\kom}[1]{{\bf [#1]}}

%\addtolength{\parskip}{0.15cm}

 \def\1{\raisebox{2pt}{\rm{$\chi$}}}

% THEOREM Environments --------------
\newtheorem{theorem}{Theorem}[section]
\newtheorem{corollary}[theorem]{Corollary}
\newtheorem{lemma}[theorem]{Lemma}

\newtheorem{definition}[theorem]{Definition}

\newtheorem{remark}[theorem]{Remark}

\numberwithin{equation}{section}

\newcommand{\R}{{\mathbb R}}

\newcommand{\N}{{\mathbb N}}

 \newcommand{\eps}{{\varepsilon}}
 \def\1{\raisebox{2pt}{\rm{$\chi$}}}
 
%\font\pic=cmr10 \font\peq=cmr8

\newcommand{\abs}[1]{\left|#1\right|}
\newcommand{\norm}[1]{\left|\left|#1\right|\right|}

\newcommand{\osc}{\operatorname{osc}}

%
% \vint barred integral, needs one index (use {} if none)
% Definition submitted by Tero Kilpel\"ainen and Pekka Koskela
% with articles for Arkiv f\"or matematik 37:2 (1999).
% Modified to give the same spacing to the left of the sign as the
% usual integral by Anders Bj\"orn, 4 January 2000.
%
% Modified to have distance in em and ex rather than pt to scale
% properly in 11pt and 12pt by Anders Bj\"orn, 16 February 2005
%
% \vintslides is provided for use with the slides style,
% uncomment the last line in this case. Anders Bj\"orn, 23 February 2005
%
\def\vint_#1{\mathchoice%
          {\mathop{\kern 0.2em\vrule width 0.6em height 0.69678ex depth -0.58065ex
                  \kern -0.8em \intop}\nolimits_{\kern -0.4em#1}}%
          {\mathop{\kern 0.1em\vrule width 0.5em height 0.69678ex depth -0.60387ex
                  \kern -0.6em \intop}\nolimits_{#1}}%
          {\mathop{\kern 0.1em\vrule width 0.5em height 0.69678ex
              depth -0.60387ex
                  \kern -0.6em \intop}\nolimits_{#1}}%
          {\mathop{\kern 0.1em\vrule width 0.5em height 0.69678ex depth -0.60387ex
                  \kern -0.6em \intop}\nolimits_{#1}}}
\def\vintslides_#1{\mathchoice%
          {\mathop{\kern 0.1em\vrule width 0.5em height 0.697ex depth -0.581ex
                  \kern -0.6em \intop}\nolimits_{\kern -0.4em#1}}%
          {\mathop{\kern 0.1em\vrule width 0.3em height 0.697ex depth -0.604ex
                  \kern -0.4em \intop}\nolimits_{#1}}%
          {\mathop{\kern 0.1em\vrule width 0.3em height 0.697ex depth -0.604ex
                  \kern -0.4em \intop}\nolimits_{#1}}%
          {\mathop{\kern 0.1em\vrule width 0.3em height 0.697ex depth -0.604ex
                  \kern -0.4em \intop}\nolimits_{#1}}}

\newcommand{\aveint}[2]{\mathchoice%
          {\mathop{\kern 0.2em\vrule width 0.6em height 0.69678ex depth -0.58065ex
                  \kern -0.8em \intop}\nolimits_{\kern -0.45em#1}^{#2}}%
          {\mathop{\kern 0.1em\vrule width 0.5em height 0.69678ex depth -0.60387ex
                  \kern -0.6em \intop}\nolimits_{#1}^{#2}}%
          {\mathop{\kern 0.1em\vrule width 0.5em height 0.69678ex depth -0.60387ex
                  \kern -0.6em \intop}\nolimits_{#1}^{#2}}%
          {\mathop{\kern 0.1em\vrule width 0.5em height 0.69678ex depth -0.60387ex
                  \kern -0.6em \intop}\nolimits_{#1}^{#2}}}

\newcommand{\ol}{\overline}

\newcommand{\dist}{\operatorname{dist}}

\newcommand{\vp}{\varphi}

\newcommand{\tr}{\operatorname{tr}}

\newcommand{\A}{\mathcal A}
\pagestyle{plain}
\begin{document}
\author[A. Attouchi]{Amal Attouchi}
\address{Department of Mathematics and Statistics, University of Jyv\"askyl\"a, PO~Box~35, FI-40014 Jyv\"askyl\"a, Finland}
\email{amal.a.attouchi@jyu.fi}
\email{eero.k.ruosteenoja@jyu.fi}

\author[Ruosteenoja]{Eero Ruosteenoja}
%\address{Department of Mathematics and Statistics, University of
%Jyv\"askyl\"a, PO~Box~35, FI-40014 Jyv\"askyl\"a, Finland}
%\email{eero.ruosteenoja@jyu.fi}
\keywords{Singular parabolic equations, regularity of the gradient, viscosity solutions}
\subjclass[2010]{35K55, 35B65, 35D40, 35K92}
\title{Gradient regularity for a singular parabolic equation in non-divergence form}
\date{\today}
\begin{abstract}
In this paper we consider viscosity solutions of a class of non-homogeneous singular parabolic
equations $$\partial_t u-|Du|^\gamma\Delta_p^N u=f,$$ where $-1<\gamma<0$, $1<p<\infty$, and $f$ is a given bounded function. We establish interior H\"older regularity of the gradient by studying two alternatives: The first alternative uses an iteration which is based on an approximation lemma. In the second alternative we use a small perturbation argument.
\end{abstract}
\maketitle
\section{Introduction}
\label{sec:intro}
We study gradient regularity of the following  singular parabolic equation in non-divergence form,
\begin{equation}\label{maineq}
	\partial_t u-|Du|^\gamma\Delta_p^N u=f\quad \quad \text{in}\quad Q_1:=B_1(0)\times (-1,0)\subset \R^n\times \R.
\end{equation}
Here $$\Delta_p^Nu:=\Delta u+(p-2) \left\langle D^2u\frac{Du}{\abs{Du}}, \frac{Du}{\abs{Du}}\right\rangle$$ is the normalized $p$-Laplacian. We assume that $-1<\gamma<0$, $1<p<\infty$, and $f$ is a given continuous and bounded function. 

When $\gamma=p-2$, equation \eqref{maineq} is the standard parabolic $p$-Laplace equation $$u_t-\Delta_p u=f,$$ and in that case it is possible to consider both  distributional weak solutions and viscosity solutions. In the case of bounded weak solutions, equivalence with viscosity solutions was shown by Juutinen, Lindqvist, and Manfredi \cite{julm}. For that equation, H\"older regularity of the gradient was shown by DiBenedetto and Friedman \cite{df85} and Wiegner \cite{wie86}, see also Kuusi and Mingione \cite{KM12} and references therein.

Another special case is $\gamma=0$, when the equation reads $$u_t-\Delta^N_p u=f.$$ The motivation to study parabolic equations involving the normalized $p$-Laplacian stems partially from connections to time-dependent tug-of-war games \cite{MPR10,parviainenr16,Han18} and image processing \cite{does11}. For regularity results concerning this equation, we refer to \cite{Banerjeeg15,AP17,Bertim19,Hoegl19,Dong19}.

Demengel \cite{dem11} proved existence, uniqueness and H\"older regularity for solutions of a class of singular or degenerate parabolic equations including \eqref{maineq}, see also \cite{bata, OS} and references therein. Argiolas, Charro, and Peral \cite{arg} showed Aleksandrov-Bakelman-Pucci type estimate, whereas Parviainen and V\'azquez \cite{PV18} showed parabolic Harnack's inequality for radial solutions. In the case $\gamma=0$ and $f\equiv 0$, Jin and Silvestre \cite{jinsl15}  showed the H\"older regularity of the gradient for solutions of \eqref{maineq}, and the result was generalized by Imbert, Jin, and Silvestre \cite{IJS17} to the whole range $-1<\gamma<\infty$. In the non-homogeneous case, Attouchi and Parviainen \cite{AP17} treated $C^{1,\alpha}$-regularity in the uniformly parabolic case $\gamma=0$, and later the same result was proved by Attouchi \cite{att19} in the degenerate case $\gamma\in (0, \infty)$. For related regularity results in the elliptic setting, we refer to \cite{wang94,Birindellid10,IS13,Birindellid14,Attouchipr17,Attouchir18,BM19}.

In this paper we continue the study of $C^{1,\alpha}$-regularity by focusing on the range $\gamma\in (-1,0)$. Our main result is the following.

\begin{theorem}\label{paalause}
Let $u$ be a viscosity solution of equation \eqref{maineq}, where $-1<\gamma<0$, $1<p<\infty$, and $f$ is a continuous and bounded function. There exist $\alpha=\alpha(p,n, \gamma)\in(0,1)$ and $C=C(p,n, \gamma,\norm{u}_{L^\infty(Q_1)},\norm{f}_{L^\infty(Q_1)})>0$ such that when $(x,t),(y,s)\in Q_{1/2}$,
$$|Du(x,t)-Du(y,s)|\leq C(|x-y|^\alpha+|t-s|^{\beta})$$
and 
$$|u(x,t)-u(x,s)|\leq C|t-s|^\sigma,$$
where $\beta:= \frac{\alpha}{2-\alpha \gamma}$ and $\sigma:=\frac{1+\alpha}{2-\alpha \gamma}$.
\end{theorem}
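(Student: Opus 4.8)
The plan is to follow the classical De Giorgi–Nash–Moser-type alternative scheme adapted to the singular, non-divergence setting, proving $C^{1,\alpha}$-regularity in space through an iterated oscillation-decay estimate for the gradient, and then transferring the spatial regularity to time regularity by an intrinsic-scaling argument dictated by the structure of the equation.

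\textbf{Step 1: Reduction and scaling.} First I would reduce to a normalized setting: by the scaling $u_\lambda(x,t)=\lambda^{-1}u(\lambda x,\lambda^{2-\gamma\alpha}t)$ (the anisotropic scaling compatible with the $|Du|^\gamma$ weight, which is exactly where the exponent $\beta=\alpha/(2-\alpha\gamma)$ comes from) together with multiplication by a small constant, I may assume $\norm{u}_{L^\infty(Q_1)}\le 1$ and $\norm{f}_{L^\infty(Q_1)}\le\eps_0$ for a small $\eps_0$ to be fixed. The goal then becomes: there exist $\rho\in(0,1)$, $\lambda\in(0,1)$, and a sequence of affine functions $\ell_k(x)=a_k\cdot x+b_k$ such that $\osc_{Q_{\rho^k}}(u-\ell_k)\le C\rho^{k(1+\alpha)}$ and $|a_{k+1}-a_k|\lesssim \rho^{k\alpha}$, which yields $Du\in C^\alpha$ in space by a standard telescoping argument.

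\textbf{Step 2: The dichotomy and the iteration step.} At each scale one runs a \emph{two-alternative} argument, exactly as advertised in the abstract. In the first alternative, the rescaled solution is close in $L^\infty$ to a solution of a \emph{homogeneous} limit equation — here, because $-1<\gamma<0$, one must be careful: when the gradient of the approximating function is bounded away from zero the limit equation is uniformly parabolic and smooth theory (or the results of \cite{IJS17}) gives a good affine approximation; when the gradient is small one instead compares with solutions of $\partial_t v-|Dv|^\gamma\Delta_p^N v=0$ and uses the a priori $C^{1,\bar\alpha}$-estimate from \cite{IJS17} (valid for all $-1<\gamma<\infty$). The approximation lemma — a compactness/contradiction argument combining the stability of viscosity solutions under uniform convergence with the local equicontinuity furnished by Demengel's estimates \cite{dem11} — shows that if $f$ is small and $u$ is close enough to such a $v$, then $u$ inherits (a slightly degraded version of) $v$'s affine approximation at a fixed smaller scale $\rho$. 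In the second alternative, when the solution is \emph{not} close to any such homogeneous profile with nondegenerate gradient, the gradient is quantitatively bounded below on a smaller cylinder, the equation becomes uniformly parabolic with $|Du|^\gamma$ playing the role of a bounded, bounded-away-from-zero coefficient, and a small-perturbation / Schauder-type argument (freezing coefficients, comparing with the constant-coefficient heat-type operator) directly produces the $C^{1,\alpha}$ decay. In either branch the oscillation of $u$ minus an affine function improves by the factor $\rho^{1+\alpha}$, which is what the induction needs.

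\textbf{Step 3: From spatial to parabolic regularity.} Once $Du(\cdot,t)\in C^\alpha_x$ uniformly in $t$ with the stated constant, the time modulus follows from the equation itself: on an intrinsic cylinder of spatial radius $r$, the parabolic boundary-layer estimate (or testing against suitable barriers, using that $\partial_t u = |Du|^\gamma\Delta_p^N u + f$ is controlled in a suitable weak sense once $Du$ is Hölder and bounded) gives $|u(x,t)-u(x,s)|\lesssim |t-s|^\sigma$ with $\sigma=(1+\alpha)/(2-\alpha\gamma)$, and then $|Du(x,t)-Du(x,s)|\lesssim|t-s|^\beta$ with $\beta=\alpha/(2-\alpha\gamma)$ by interpolating the spatial Hölder bound for $Du$ against the time-oscillation bound for $u$ on the correctly-scaled cylinder. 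The denominator $2-\alpha\gamma$ is precisely the parabolic scaling exponent of the operator $|Du|^\gamma\Delta_p^N$ when $|Du|$ is comparable to the spatial oscillation rate $r^\alpha$.

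\textbf{Main obstacle.} The delicate point — and where the restriction $-1<\gamma<0$ genuinely bites — is the \emph{singular} region where $|Du|\to 0$: there the coefficient $|Du|^\gamma$ blows up, so the equation is not uniformly parabolic and the usual freezing argument of the second alternative fails. One must show that near points where the gradient is small the \emph{first} alternative always applies, i.e. that $u$ is necessarily close to a homogeneous profile there; this requires a careful quantitative version of the approximation lemma with constants uniform as $\gamma\downarrow-1$, and is the technical heart of the proof. Controlling the interplay between the two alternatives so that the chosen scale $\rho$ and smallness $\eps_0$ can be fixed \emph{independently of the scale} $k$ is the crux.
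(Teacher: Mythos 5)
Your high-level plan (two alternatives, approximation by homogeneous solutions, iterated affine approximation, intrinsic scaling in time) matches the strategy of the paper, but your treatment of the non-singular (second) alternative contains a genuine gap. You assert that ``the gradient is quantitatively bounded below on a smaller cylinder, the equation becomes uniformly parabolic,'' but you never explain how to control the gradient of the rescaled deviation from the plane. After rescaling at the stopping scale $k$, the function $w=(u(\rho^kx,\rho^{2k}(1-\delta)^{-k\gamma}t)-l_k\cdot\rho^kx)/(\rho^k(1-\delta)^k)$ solves an equation whose coefficient is $|Dw+K|^\gamma$ with $K=l_k(1-\delta)^{-k}$ the normalized slope. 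To invoke uniformly parabolic theory one needs two-sided bounds $1\le|Dw+K|\le 1+M$; the lower bound requires both $|K|\ge 2$ (which the stopping criterion supplies) \emph{and} a Lipschitz bound $|Dw|\le 1$, and the latter is far from automatic. The paper devotes all of Section \ref{ishii} to proving exactly this (Lemma \ref{toulouse}) by a delicate two-step Ishii--Lions argument: first a H\"older estimate for $w$ with an exponent tuned to $\gamma$ and $M$, then a Lipschitz estimate that uses the H\"older step to absorb the error terms generated by the singular coefficient. Without such a lemma the second alternative cannot be closed, and a ``freezing coefficients / Schauder-type'' argument is not a substitute, because a priori $|Dw+K|^\gamma$ is unbounded.

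Two smaller inaccuracies. You describe the first alternative as sometimes comparing with a uniformly parabolic limit ``when the gradient of the approximating function is bounded away from zero,'' but in the paper the iterative (first) alternative always compares with the singular homogeneous equation $\partial_t v=|Dv|^\gamma\Delta_p^N v$ and uses the $C^{1,\beta_0}$ estimate of Imbert--Jin--Silvestre; the uniformly parabolic regime occurs exclusively in the stopping case. And your Step 3 (``interpolating the spatial H\"older bound for $Du$ against the time-oscillation bound for $u$'') is not how the time exponent arises: the paper reads the intrinsic time decay off the oscillation estimates produced by the iteration (on intrinsic cylinders $Q_{\rho^i}^{(1-\delta)^i}$) together with the classical parabolic $C^{1,\bar\alpha}$ bound in the stopping alternative, then telescopes. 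A bare space--time interpolation that does not go through the equation and the intrinsic geometry will not produce $\sigma=(1+\alpha)/(2-\alpha\gamma)$, since the time scaling is tied to the decaying gradient magnitude at each scale.
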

Our proof relies on the method of alternatives and the improvement of flatness. The strategy is to define a process that provides a better linear approximation in a smaller cylinder, and which we can iterate until we reach a cylinder where a so called smooth alternative holds. More precisely, we define an induction hypothesis based on the size of the slope, see Corollary \ref{iterimp}. In order to proceed with the iteration, we use an intrinsic scaling together with the approximation lemma \cite[Lemma 4.1]{att19}. This lemma enables us to consider the solution of \eqref{maineq} as a perturbation of the solution of the corresponding homogeneous equation when the absolute size of $f$ is sufficiently small. We may assume this by scaling.  

The induction step can only work indefinitely if the gradient vanishes. In the case the iteration stops, our strategy is to show that the solution is sufficiently close to a linear function with a non-vanishing gradient. We show that in that case the solution itself has a gradient that is bounded away from zero in some cylinder. Hence, the equation is uniformly parabolic and no longer singular, so we can use the general regularity result from \cite{LU68,Lieb96}. 

We remark that this method is flexible enough to be applied to the study of the gradient regularity for solutions of a more general class of singular, fully nonlinear parabolic equations of the type 
\[
u_t-|Du|^\gamma F(D^2 u)=f,
\] 
or those considered in \cite{dem11}, once the regularity for the corresponding homogeneous case has been treated.

This paper is organised as follows. In Section \ref{prel} we fix the notation and provide a Lipschitz estimate (Lemma \ref{toulouse}), which is used in the non-singular alternative. In Section \ref{applemmas} we prove lemmas related to the improvement of flatness and iteration, and in Section \ref{sect5} we prove Theorem \ref{paalause}. The technical proof of Lemma \ref{toulouse} is postponed to Section \ref{ishii}. \\

\noindent \textbf{Acknowledgement.} A.A. was supported by the Academy of Finland, project number 307870. 

\section{Preliminaries}\label{prel}
\subsection{Notation}
We denote parabolic cylinders by $$Q_r(x,t):=B_r(x)\times (t-r^2,t), $$ where $B_r(x)\subset \R^n$ is a ball centered at $x$ with radius $r>0$. We will also use intrinsic parabolic cylinders $$Q^a_r(x,t):=B_r(x)\times (t-r^2a^{-\gamma},t), $$ where $\gamma\in (-1,0)$. We denote $$Q_r:=Q_r(0,0)\quad \text{and}\quad Q^{a}_r:=Q^{a}_r(0,0). $$

We define viscosity (super-, sub-) solutions of equation \eqref{maineq} as follows.

\begin{definition}
A locally bounded and lower semi-continuous function $u$ is a viscosity supersolution of \eqref{maineq}, if at any point $(x_0,t_0)\in Q_1$, one of the following holds: 
\begin{enumerate}
\item For any $\phi\in C^{2,1}(Q_1)$, $D\phi(x_0,t_0)\neq 0$, touching $u$ from below at $(x_0,t_0)$, it holds 
\[
\partial_t \phi(x_0,t_0)-|D\phi(x_0,t_0)|^\gamma\Delta_p^N \phi\geq f(x_0,t_0).
\]
\item If there is $\delta>0$ and $\phi\in C^1(t_0-\delta,t_0+\delta)$ satisfying $\phi(t_0)=0$, $u(x_0,t_0)\leq u(x_0,t)-\phi(t)$ for any $t\in (t_0-\delta,t_0+\delta)$, and $\inf_{(t_0-\delta,t_0+\delta)}(u(x,t)-\phi(t))$ is a constant in some neighborhood of $x_0$, then it holds 
\[
\phi'(t_0)\geq f(x_0,t_0).
\]
\end{enumerate}

Similarly, viscosity subsolutions are defined changing touching from below by touching from above, inf by sup, and $\geq$ by $\leq$.

A continuous function is a viscosity solution of \eqref{maineq} if it is both viscosity sub- and supersolution. 
\end{definition}

Without a loss of generality, we assume that $u(0,0)=0$, $\norm{u}_{L^\infty(Q_1)}\leq \frac12$ and $\norm{f}_{L^\infty(Q_1)}\leq \eps_0,$ where $\eps_0>0$ will be fixed in Lemma \ref{approl} below. Indeed, we can use the scaling $$u_\theta(x,t)=\theta u(\theta^{\frac{-\gamma}{2+\gamma}}x,t),$$ where $$\theta=\frac{1}{2\norm{u}_{L^\infty(Q_1)}+\frac{\norm{f}_{L^\infty(Q_1)}}{\eps_0}+1}.$$

\subsection{Intermediate lemmas}
In this section we  gather some intermediate lemmas that will play a role in the proof of the Hölder regularity of  the spatial gradient. First we recall the following result on the Lipschitz estimates for solutions to \eqref{maineq}.
\begin{lemma}\label{gort} \emph{(}\cite[Lemma 3.1]{att19}\emph{)}
Let $-1<\gamma<\infty$, $1<p<\infty$ and $f\in C(Q_1)\cap L^\infty(Q_1)$.		Let $u$ be a bounded viscosity solution to equation \eqref{maineq}. There exists a constant $C=C(p,n,\gamma)>0$ such that   for all  $(x,t),(y,t)\in Q_{7/8}$, it holds
		\begin{equation*}
		\begin{split}
		\abs{ u(x,t)-u(y,t)}\leq C \left(\norm{  u}_{L^\infty(Q_1)}+\norm{  u}_{L^\infty(Q_1)}^{\frac{1}{1+\gamma}}+\norm{f}_{L^\infty(Q_1)}^{\frac{1}{1+\gamma}}\right) \abs{x-y}.
		\end{split}
		\end{equation*}
	\end{lemma}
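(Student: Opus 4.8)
\emph{Proof strategy.} The plan is to prove this interior Lipschitz-in-space bound by the Ishii--Lions method of doubling of variables; the structural fact used is that for every $q\neq 0$ the linear operator $M\mapsto\operatorname{tr}(A(q)M)$, with $A(q):=I+(p-2)\widehat q\otimes\widehat q$, is uniformly elliptic with constants $\min(1,p-1)$ and $\max(1,p-1)$. A preliminary scaling reduces matters to the normalized case $\norm{u}_{L^\infty(Q_1)}+\norm{f}_{L^\infty(Q_1)}\le 1$, in which the target Lipschitz constant is a universal $C=C(p,n,\gamma)$; undoing this scaling reintroduces the $L^\infty$-dependent factor, and the exponent $1/(1+\gamma)$ in the statement reflects the way the right-hand side $f$ transforms under it. By a covering argument it then suffices, for a fixed base point $(x_0,t_0)$ with $Q_R(x_0,t_0)\subset Q_1$ and $R$ universal, to bound $\abs{u(x_0,t_0)-u(y,t_0)}$ for $y\in B_{R/2}(x_0)$.

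The core is to fix a large universal $L$ and prove, by contradiction, that
\[
\Phi(x,y,t):=u(x,t)-u(y,t)-L\,\omega(\abs{x-y})-\frac{\Lambda_0}{R^2}\bigl(\abs{x-x_0}^2+\abs{y-x_0}^2\bigr)
\]
cannot attain a positive maximum on $\overline{B_R(x_0)}\times\overline{B_R(x_0)}\times[t_0-R^2,t_0]$, where $\Lambda_0$ is a fixed constant $\gtrsim\norm{u}_{L^\infty}$ (so the maximum is not on the lateral boundary) and $\omega(\rho)=\rho-\omega_0\rho^{1+\tau}$ on $[0,\rho_0]$ is a concave increasing modulus, $0<\tau<1$, with $\omega_0,\rho_0$ small universal, extended so as to stay bounded, increasing and concave. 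If the maximum were attained at $(\bar x,\bar y,\bar t)$, one checks in the usual way that $\bar x\neq\bar y$, that $\bar x,\bar y$ are interior to $B_R(x_0)$, and---after inserting a vanishing penalization $\sigma/(t-(t_0-R^2))$, sent to $0$ at the end---that $\bar t$ is interior in time. Positivity of $\Phi$ forces $L\,\omega(\abs{\bar x-\bar y})\le 2\norm{u}_{L^\infty}$, hence $s:=\abs{\bar x-\bar y}$ is small, $\omega'(s)\in[\tfrac12,1]$, and $\abs{\omega''(s)}=\omega_0\tau(1+\tau)s^{\tau-1}$ is large; consequently the test-function gradients $p_x=L\omega'(s)\hat e+\tfrac{2\Lambda_0}{R^2}(\bar x-x_0)$ and $p_y=L\omega'(s)\hat e-\tfrac{2\Lambda_0}{R^2}(\bar y-x_0)$, with $\hat e=(\bar x-\bar y)/s$, satisfy $\abs{p_x},\abs{p_y}\asymp L$ once $L$ is large compared with $\Lambda_0/R$. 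In particular both gradients are \emph{bounded away from zero}, so the singular factor $\abs{Du}^\gamma$ plays no role at the contradiction point and the non-singular branch of the definition of viscosity solution applies. The parabolic theorem on sums then provides $a\in\R$ and symmetric matrices $X,Y$ with $(a,p_x,X)\in\overline{\mathcal{P}}^{2,+}u(\bar x,\bar t)$, $(a,p_y,Y)\in\overline{\mathcal{P}}^{2,-}u(\bar y,\bar t)$ and the usual matrix inequality relative to the Hessian of $L\omega(\abs{x-y})+\tfrac{\Lambda_0}{R^2}(\abs{x-x_0}^2+\abs{y-x_0}^2)$ at $(\bar x,\bar y)$; subtracting the sub- and supersolution inequalities gives
\[
\abs{p_x}^\gamma\operatorname{tr}(A(p_x)X)-\abs{p_y}^\gamma\operatorname{tr}(A(p_y)Y)\ge-2\norm{f}_{L^\infty(Q_1)}.
\]

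I expect the delicate part to be the final step---deducing a contradiction by showing that the left-hand side is strictly below $-2\norm{f}_{L^\infty}$. This is where the concavity of $\omega$ matters: it forces the Hessian of $L\omega(\abs{x-y})$ to have a large negative eigenvalue in the direction $\hat e$, which, propagated through the matrix inequality and combined with the uniform ellipticity of $A(\cdot)$, yields a second-order quantity dominating the right-hand side, provided all the error terms are kept of lower order. The only error term that genuinely depends on $\gamma$ comes from $p_x\neq p_y$ and is handled via $\abs{p_x-p_y}\lesssim 1$ together with $\abs{p_x},\abs{p_y}\asymp L$. Making the estimate close hinges on a careful, equation-specific balancing of the parameters $L$, $\Lambda_0/R^2$, $\tau$ and the regularization parameter in the theorem on sums; this is the one step I expect to be nontrivial. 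Once $\Phi\le 0$ is established, taking $x=x_0$ and letting $y$ vary over $B_{R/2}(x_0)$ gives $\abs{u(x_0,t_0)-u(y,t_0)}\le CL\abs{x_0-y}$; varying the base point, combining with the trivial bound $\abs{u(x,t_0)-u(y,t_0)}\le 2\norm{u}_{L^\infty}$ for points far apart, and undoing the normalization yields the claimed estimate.
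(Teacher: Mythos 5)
The paper does not reprove this statement; it quotes it from \cite[Lemma~3.1]{att19}. However, the paper's own Ishii--Lions proof of Lemma~\ref{toulouse} in Section~\ref{ishii} follows exactly the scheme that \cite{att19} uses for Lemma~3.1, and comparing your proposal to that scheme reveals a genuine gap in the single-step argument you describe.

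The difficulty is in the balancing you flag as ``nontrivial'': with the concave modulus $\omega(\rho)=\rho-\omega_0\rho^{1+\tau}$, the good term coming from concavity is of size $L|\omega''(s)|\asymp L\omega_0 s^{\tau-1}$, while $\norm{Y}\lesssim L\max(|\omega''(s)|,\omega'(s)/s)\asymp L/s$. The two error terms produced by $p_x\neq p_y$, namely $\operatorname{tr}\big((\A(p_x)-\A(p_y))Y\big)$ and the $(|p_x|^\gamma-|p_y|^\gamma)$ term, are therefore of size
\[
|\hat p_x-\hat p_y|\,\norm{Y}\;\lesssim\;\frac{|p_x-p_y|}{L}\cdot\frac{L}{s}\;=\;\frac{|p_x-p_y|}{s}.
\]
With only $|p_x-p_y|\lesssim 1$, as you propose, these errors are of order $1/s$, so the ratio of error to good term is $s^{-\tau}/L$. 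As $s=|\bar x-\bar y|$ tends to $0$ --- and there is no a priori lower bound on $s$, only the upper bound $Ls\lesssim\norm{u}_{L^\infty}$ --- this ratio blows up, and no choice of $L$, $\Lambda_0/R^2$, $\tau$, or the Jensen--Ishii regularization parameter can close the contradiction. (Observe by contrast that with a H\"older-type modulus $\omega(\rho)=\rho^\beta$, $\beta<1$, the same ratio becomes $s^{1-\beta}/L\to 0$, which is why the H\"older step does close.)

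What is missing is precisely the two-step structure that the paper uses in Section~\ref{ishii} and that \cite{att19} uses in the proof of the cited lemma: first run Ishii--Lions with the modulus $L|x-y|^{\bar\beta}$ to obtain a H\"older estimate, and then, in the Lipschitz step, use the established H\"older bound together with positivity of $\Phi$ to show $\Lambda_0|\bar x-x_0|^2,\,\Lambda_0|\bar y-y_0|^2\lesssim s^{\bar\beta}$ (compare \eqref{coimbre}), hence $|p_x-p_y|\lesssim s^{\bar\beta/2}$. This upgrades the error terms to $O(s^{\bar\beta/2-1})$, which is dominated by $L\omega_0 s^{\tau-1}$ once $\tau<\bar\beta/2$ and $L$ is large. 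Without this intermediate H\"older information your estimate of the error terms via $|p_x-p_y|\lesssim 1$ is too crude and the argument does not go through.

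Two secondary remarks. First, the target statement covers the full range $-1<\gamma<\infty$; your handling of the $|p_x|^\gamma-|p_y|^\gamma$ term needs to be checked separately in the degenerate range $\gamma\ge 0$, where the elementary inequality used for $\gamma\in(-1,0)$ (see the estimate of $(i_3)$ in Section~\ref{ishii}) is not available in the same form. Second, the normalization to $\norm{u}_{L^\infty}+\norm{f}_{L^\infty}\le 1$ is not a simple multiplicative rescaling of $u$ alone: because of the factor $|Du|^\gamma$, the paper uses the anisotropic scaling $u_\theta(x,t)=\theta u(\theta^{-\gamma/(2+\gamma)}x,t)$, and it is this scaling --- not the $u\mapsto u/A$ one might first try --- that produces the exponent $1/(1+\gamma)$ on $\norm{f}_{L^\infty}$ in the final estimate; make sure the normalization step is carried out with this form.
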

	
Next, we consider bounded  solutions of
\begin{equation}\label{deviaatio}
\partial_t w-|Dw+K|^\gamma\left[\Delta w+(p-2) \dfrac{\langle D^2 w (Dw +K), (\ Dw+K)\rangle}{|Dw+K|^2}\right]=\tilde f\quad\text{in}\quad Q_1,
\end{equation}
with $w(0)=0$, $\norm{w}_{L^\infty(Q_1)}\leq 1, \norm{\tilde{f}}_{L^\infty(Q_1)}\leq 1$ and $|K|\geq 1$. 
The previous  Lemma provides a first Lipschitz estimate for $w$. Indeed, since $w(x,t)+K\cdot x$ is a solution to \eqref{maineq}, $1\leq |K|$ and $\norm{w}_{L^\infty(Q_1)}\leq 1$, we get
\begin{equation}\label{levicopar}
		\begin{split}
		\abs{ w(x,t)-w(y,t)}\leq C \left(\norm{  w}_{L^\infty(Q_1)}+|K|^{\frac{1}{1+\gamma}}+\norm{\tilde f}_{L^\infty(Q_1)}^{\frac{1}{1+\gamma}}\right) \abs{x-y}\leq \bar C|K|^{\frac{1}{1+\gamma}}|x-y|,
		\end{split}
		\end{equation}
		for some $\bar C=\bar C(p,n,\gamma)$.
However, we can improve this estimate and provide a better control on the gradient.  This estimate will play a key role in the non-singular alternative.		
\begin{lemma}\label{toulouse} Let $-1<\gamma<0$ and $1<p<\infty$.
Let $K\in \R^n$ with $2\leq |K|\leq M$ for some $M=M(p,n, \gamma)>0$.
Let $w$ be a viscosity solution of \eqref{deviaatio}, with $w(0,0)=0$. There exists $\eta_1=\eta_1(p,n,\gamma)$ such that if $\norm{w}_{L^\infty(Q_1)}\leq \eta_1$ and $\norm{\tilde f}_{L^\infty(Q_1)}\leq \eta_1$, then $w$ is locally  Lipschitz continuous in space and for $(x,t), (y,t)\in Q_{3/4}$ we have
$$|w(x,t)-w(y,t)|\leq |x-y|.$$
\end{lemma}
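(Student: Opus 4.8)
The plan is to prove the Lipschitz bound by a linearization-and-perturbation argument combined with the improvement from $C^{0,\alpha}$ to $C^{0,1}$ estimates for uniformly parabolic equations. The key observation is that when $|K|$ is bounded below by $2$ and $w$ is small (say $\norm{w}_{L^\infty}\le\eta_1$), the gradient $Dw+K$ cannot vanish in the region of interest: once we know $w$ is Lipschitz with a controlled constant from \eqref{levicopar}, the spatial oscillation of $w$ over a small ball is tiny, hence $|Dw+K|$ is comparable to $|K|$, and the operator in \eqref{deviaatio} is uniformly parabolic there. Since $2\le|K|\le M$, all the structural constants are under control depending only on $p,n,\gamma$.

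First I would record the a priori Lipschitz estimate \eqref{levicopar}, which gives $|w(x,t)-w(y,t)|\le \bar C|K|^{1/(1+\gamma)}|x-y|\le \bar C M^{1/(1+\gamma)}|x-y|$ on $Q_{7/8}$. This is a genuine Lipschitz bound but with a large constant. Next I would show that this forces $|Dw+K|$ to stay in a fixed annulus: heuristically, $w$ being Lipschitz with a universal constant means that on the scale of the problem $Dw$ is a bounded perturbation, but to get it genuinely small we rescale. Introduce $v(x,t) := \lambda^{-1} w(\lambda x, \lambda^2 t)$ for a small parameter $\lambda=\lambda(p,n,\gamma)\in(0,1)$ to be chosen; then $v$ solves an equation of the same form \eqref{deviaatio} with the same $K$ and right-hand side $\tilde f_\lambda(x,t)=\lambda\tilde f(\lambda x,\lambda^2 t)$, so $\norm{\tilde f_\lambda}_{L^\infty}\le\eta_1$, while $\norm{Dv}_{L^\infty(Q_{7/8})}\le \bar C M^{1/(1+\gamma)}$ is unchanged — so rescaling alone does not shrink the gradient. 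The correct device is instead to use $\eta_1$ smallness directly: choose first a spatial scale $\rho$ so that $\bar C M^{1/(1+\gamma)}\rho \le 1/4$, and work in $Q_\rho(x_0,t_0)$ for an arbitrary point, where the oscillation of $w$ in space is at most $1/4$; then translate and subtract the affine part. Concretely, on such a small cylinder the function $w$ is within $1/4$ of an affine function with slope close to $K$ in an $L^\infty$ sense, and one checks $|Dw+K|\ge |K|-\tfrac12\ge \tfrac32$ wherever $Dw$ makes sense (in the viscosity sense, via test functions).

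Once uniform parabolicity is established on a small cylinder — the operator is $\partial_t w - a_{ij}(Dw)D_{ij}w = \tilde f$ with $a_{ij}$ bounded, measurable, and uniformly elliptic with ellipticity constants depending only on $p,n$, and $|Dw+K|^\gamma$ bounded above and below because $|Dw+K|\in[\tfrac32, \text{const}\cdot M]$ — I would invoke the standard interior Lipschitz (indeed $C^{1,\alpha}$) estimate for viscosity solutions of uniformly parabolic equations in non-divergence form, as in \cite{LU68,Lieb96} or the Krylov–Safonov theory, to conclude $\norm{Dw}_{L^\infty(Q_{\rho/2}(x_0,t_0))}\le C_*(p,n,\gamma)(\norm{w}_{L^\infty}+\norm{\tilde f}_{L^\infty})$. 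Since the right-hand side is $\le 2C_*\eta_1$, choosing $\eta_1$ small enough that $2C_*\eta_1\le 1$ (and small enough to run the previous steps) gives $|Dw|\le 1$ on a neighborhood of every interior point, hence $|w(x,t)-w(y,t)|\le|x-y|$ for $(x,t),(y,t)\in Q_{3/4}$ after covering $Q_{3/4}$ by finitely many such cylinders and integrating along the segment from $x$ to $y$ at fixed time $t$.

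The main obstacle I anticipate is making rigorous the claim that $|Dw+K|$ is bounded away from zero \emph{in the viscosity sense}, since $w$ need not be differentiable: one must argue that any smooth test function $\phi$ touching $w$ from above or below at an interior point of the small cylinder has $|D\phi+K|\ge \tfrac32$, which follows because the affine comparison forces $\phi$ to have controlled slope, but this requires a careful barrier/comparison argument rather than a pointwise bound. The second delicate point is that $\gamma<0$ makes $|Dw+K|^\gamma$ \emph{blow up} as the gradient degenerates, so it is essential that the lower bound $|Dw+K|\ge\tfrac32$ is genuinely uniform before one can claim the coefficient $|Dw+K|^\gamma$ is bounded above; the bound $|K|\le M$ is what prevents $|Dw+K|^\gamma$ from being too small. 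Both difficulties are handled by first securing the crude Lipschitz bound \eqref{levicopar}, then localizing to a scale determined by $M$ and $\bar C$, and only then applying uniformly parabolic regularity; the detailed Ishii–Jensen comparison argument underlying the viscosity lower bound on the gradient is deferred to Section \ref{ishii}.
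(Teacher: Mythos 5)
There is a genuine gap, and it sits exactly at the step you flag as "the main obstacle I anticipate."

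Your plan is to first show that $|Dw+K|$ is bounded away from zero (so that the equation is uniformly parabolic), and then invoke interior Lipschitz/$C^{1,\alpha}$ regularity for uniformly parabolic equations. The problem is that the step you use to get the lower bound on $|Dw+K|$ is not valid, and the reasoning becomes circular. You argue that because $\|w\|_{L^\infty}\le\eta_1$ is small, on a small cylinder $w$ is $L^\infty$-close to an affine function with slope $-K$, and hence $|Dw+K|\ge|K|-\tfrac12\ge\tfrac32$ wherever test functions touch $w$. But $L^\infty$-closeness to an affine function does not control the gradient of $w$ or of a touching test function: a function of $L^\infty$-size $\eps$ can have gradient of size $1/\eps$, and a paraboloid touching $w$ from above at an interior point can have arbitrarily large slope even when $\osc\, w$ is tiny. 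The only a priori gradient control you have is \eqref{levicopar}, which gives $\Lip(w)\le\bar C M^{1/(1+\gamma)}$; for $\gamma$ near $-1$ this is much larger than $|K|\ge 2$, so nothing prevents $Dw+K$ from vanishing somewhere, at least not on the basis of that estimate. In other words, the assertion "$|Dw+K|\ge\tfrac32$ in the viscosity sense" is essentially equivalent to the Lipschitz estimate $\|Dw\|_\infty\le\tfrac12$ that you are trying to prove, and you defer it to Section \ref{ishii} without a substitute argument — so the proposal does not actually close.

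For comparison, the paper's proof does not try to first establish uniform parabolicity; it proves the Lipschitz bound directly by a two-stage Ishii--Lions doubling argument. In the first stage one proves a quantitative H\"older estimate $|w(x,t)-w(y,t)|\le|x-y|^{\bar\beta}$ with $\bar\beta$ chosen according to $\bar C$ and $M$, by penalizing with $L_2|x-y|^{\bar\beta}+\tfrac{L_1}{2}(|x-x_0|^2+|y-y_0|^2+(t-t_0)^2)$ and choosing $L_1,L_2$ proportional to $\|w\|_\infty$ and $\|\tilde f\|_\infty$; the hypothesis $|K|\ge 2$ is used inside the doubling argument to guarantee that the vectors $\xi_i=a_i+K$ stay in the annulus $\tfrac{L_2}{4}\bar\beta|\bar x-\bar y|^{\bar\beta-1}\le|\xi_i|\le 2|K|$, so the singular coefficient $|\xi_i|^\gamma$ is always evaluated away from the origin (see \eqref{koivu1}, \eqref{paris}). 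In the second stage one runs Ishii--Lions again with the refined penalization $\varphi(s)=s-\kappa_0 s^\nu$, $\nu=1-\gamma\bar\beta/2$, and uses the H\"older estimate from stage one to control the localization terms $L_1|\bar x-x_0|^2$ and the differences $|\hat\xi_1-\hat\xi_2|$ (see \eqref{coimbre}), which produces the Lipschitz bound with constant $L_2\le 1$. At no point does the paper's proof presuppose that $|Dw+K|$ is pointwise bounded below; that control emerges only \emph{after} Lemma \ref{toulouse} is established, and it is precisely the point of the lemma that it provides the input needed to apply uniformly parabolic regularity in Section \ref{sect5}. Your overall strategy (localize, get uniform parabolicity, cite LU68/Lieberman) is exactly what the paper does \emph{downstream} of this lemma, but cannot be used to prove the lemma itself.
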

The proof  makes use of the Ishii-Lions method. It proceeds in two steps: first we obtain good enough H\"older estimates, and then use the H\"older estimate and the Ishii-Lions method again to prove the desired Lipschitz estimates. We postpone the technical proof to Section \ref{ishii} in order to keep the paper easier to read.

\section{Approximation Lemmas and iteration}\label{applemmas}
In this section we state the approximation lemmas needed  to implement the iteration, and define the induction hypothesis. Let us first recall an approximation result from \cite{att19}.
	
	\begin{lemma}\label{approl}\emph{(}\cite[Lemma 4.1]{att19}\emph{)}
Let $-1<\gamma<0$ and $1<p<\infty$. Let $u$ be a viscosity solution to \eqref{maineq} with $\underset{Q_1}{\osc} \, u\leq 1$.	For every $\tau >0$, there exists  $\eps_0(p,n, \gamma, \tau)\in (0, 1)$ such that  if $\norm{f}_{L^\infty(Q_1)}\leq \eps_0$, then there exists a solution $\tilde u$ to 
	\begin{equation}\label{homeq1}\partial_t\tilde u=|D\tilde u|^\gamma\Delta_p^N\tilde u\quad\text{in}\quad Q_{11/16}
	\end{equation}
	such that 
	$$\norm{u-\tilde u}_{L^\infty(Q_{5/8})}\leq \tau.$$	
	\end{lemma}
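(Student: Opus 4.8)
The plan is to prove the lemma by a compactness (normal family) argument together with the stability of viscosity solutions under locally uniform convergence. Suppose, for contradiction, that the conclusion fails for some $\tau_0>0$: then there are viscosity solutions $u_j$ of $\partial_t u_j-|Du_j|^\gamma\Delta_p^N u_j=f_j$ in $Q_1$ with $\underset{Q_1}{\osc}\,u_j\le 1$ and $\norm{f_j}_{L^\infty(Q_1)}\le 1/j$, such that $\norm{u_j-\tilde u}_{L^\infty(Q_{5/8})}>\tau_0$ for \emph{every} solution $\tilde u$ of \eqref{homeq1} in $Q_{11/16}$. After subtracting an additive constant (which changes neither the equation nor the oscillation) we may assume $0\le u_j\le 1$ in $Q_1$.

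The first step is compactness. By Lemma \ref{gort}, applied with $\norm{u_j}_{L^\infty(Q_1)}\le 1$ and $\norm{f_j}_{L^\infty(Q_1)}\le 1$, the family $\{u_j\}$ is uniformly Lipschitz in space on $\overline{Q_{7/8}}$; together with the equation (or directly with the interior H\"older estimate of Demengel \cite{dem11}) this yields a uniform modulus of continuity in time as well, so $\{u_j\}$ is uniformly bounded and equicontinuous on $\overline{Q_{3/4}}$. By Arzel\`a--Ascoli a subsequence, still denoted $u_j$, converges uniformly on $\overline{Q_{3/4}}$ to some $u_\infty\in C(\overline{Q_{3/4}})$.

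The second and main step is to show that $u_\infty$ is a viscosity solution of the homogeneous equation \eqref{homeq1} in $Q_{11/16}$. This is the standard stability property. Let $\phi\in C^{2,1}$ touch $u_\infty$ from below at an interior point $(x_0,t_0)$; replacing $\phi$ by $\phi(x,t)-|x-x_0|^4-|t-t_0|^2$ we may assume the touching is strict, and since the added term has vanishing gradient and Hessian at $(x_0,t_0)$ it does not affect any limiting inequality. By uniform convergence, $u_j-\phi$ attains an interior local minimum at points $(x_j,t_j)\to(x_0,t_0)$. If $D\phi(x_0,t_0)\ne 0$, then $D\phi(x_j,t_j)\ne0$ for large $j$, and alternative (1) in the definition gives $\partial_t\phi(x_j,t_j)-|D\phi(x_j,t_j)|^\gamma\Delta_p^N\phi(x_j,t_j)\ge f_j(x_j,t_j)$; letting $j\to\infty$ and using $\norm{f_j}_{L^\infty}\to 0$ yields the supersolution inequality for \eqref{homeq1}. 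If $D\phi(x_0,t_0)=0$ one argues through alternative (2): the critical test functions depend only on $t$, the operator contributes nothing, and the same passage to the limit applies. The subsolution inequality is obtained symmetrically. Hence $u_\infty$ solves \eqref{homeq1} in $Q_{11/16}$. Taking $\tilde u=u_\infty$, we get $\norm{u_j-u_\infty}_{L^\infty(Q_{5/8})}\to 0$, contradicting $\norm{u_j-\tilde u}_{L^\infty(Q_{5/8})}>\tau_0$. This produces the required $\eps_0(p,n,\gamma,\tau)$.

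The delicate point is the stability step: one must carry the two alternatives of the viscosity definition along the limit so that the possible vanishing of $D\phi$ causes no loss, and one must be sure the a priori equicontinuity estimate holds on a cylinder strictly larger than $Q_{11/16}$ (so that $u_\infty$ is genuinely an interior solution there), which is exactly what Lemma \ref{gort} together with \cite{dem11} provides.
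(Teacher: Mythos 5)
Your compactness-and-stability argument is precisely the route taken in \cite[Lemma 4.1]{att19}, which the paper simply cites, so the proposal matches the paper's proof in substance. The only place you compress things is the stability step when $D\phi(x_0,t_0)=0$: for the two-alternative definition used here one must transfer the time-only test functions of alternative (2) from the limit $u_\infty$ back to the approximating $u_j$ (the framework of Ohnuma--Sato \cite{OS} is what makes this work), and you are right to flag this as the delicate point even if your one-line description glosses over it.
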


Next we define a first linear approximation lemma by combining the previous lemma with the regularity result of \cite[Theorem 1.1]{IJS17}. Since uniform Lipschitz estimates for deviations from planes don't seem to be available in the singular case, we refine \cite[Lemma 4.2]{att19} by adding the parameter $\eta_1$. This allows us to handle the case where the iteration stops by making $u$ as close as needed to a linear function with a non-vanishing gradient. For the following lemma, recall that
$$Q_{\rho}^{(1-\delta)}=B_{\rho}\times (-\rho^{2}(1-\delta)^{-\gamma}, 0].$$

	\begin{lemma}\label{starting}
	Let $-1<\gamma<0$ and $1<p<\infty$. 
	 Let $\eta_1$ be the constant defined in Lemma \ref{toulouse}. Let $u$ be a viscosity solution to \eqref{maineq} such that $\underset{Q_1}{\osc}\, u\leq 5$. There exist $\eps_0=\eps_0(p,n, \gamma)>0$, such that if $\norm{f}_{L^\infty(Q_1)} \leq \eps_0$, then there exist  $\rho=\rho(p,n, \gamma)>0$  and $\delta=\delta(p,n, \gamma)\in (0,1/2)$ with $\rho<(1-\delta)^{\gamma+1}$ and  a vector $q$ with $|q|\leq B=B(n, p, \gamma)$ such that
	$$\underset{(x,t)\in Q_{\rho}^{(1-\delta)}}{\osc}\, (u(x,t)-q\cdot x)\leq \eta_1\rho(1-\delta).$$

	\end{lemma}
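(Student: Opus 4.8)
The plan is to combine the two approximation results already available: Lemma~\ref{approl}, which says that under a smallness assumption on $f$ the solution $u$ is $L^\infty$-close on $Q_{5/8}$ to a solution $\tilde u$ of the homogeneous equation \eqref{homeq1}, and the interior $C^{1,\alpha}$-estimate of Imbert--Jin--Silvestre \cite[Theorem 1.1]{IJS17} applied to $\tilde u$. The only new feature compared with \cite[Lemma 4.2]{att19} is that the target deviation must be controlled by the fixed constant $\eta_1$ coming from Lemma~\ref{toulouse}, so the proof is a matter of choosing the free parameters ($\tau$ in Lemma~\ref{approl}, and the radius $\rho$) in the right order.

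First I would normalise: since $\osc_{Q_1}u\le 5$, divide by $5$ so that the rescaled function has oscillation $\le 1$ and apply Lemma~\ref{approl} with a parameter $\tau$ to be fixed, obtaining $\tilde u$ solving \eqref{homeq1} in $Q_{11/16}$ with $\norm{u-\tilde u}_{L^\infty(Q_{5/8})}\le \tau$ (multiplying back, the constants get an extra factor $5$, which is harmless). By \cite[Theorem 1.1]{IJS17}, $\tilde u\in C^{1,\alpha_0}$ near the origin with a bound $\norm{\tilde u}_{C^{1,\alpha_0}(Q_{1/2})}\le C_0=C_0(p,n,\gamma)$ depending only on $\osc \tilde u \lesssim 1$; in particular $\tilde u(0,0)$ and $q:=D\tilde u(0,0)$ satisfy $|q|\le B:=C_0$. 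Then for $(x,t)\in Q_\rho^{(1-\delta)}$, writing $\tilde u(x,t)-q\cdot x = \bigl(\tilde u(x,t)-\tilde u(0,0)-q\cdot x\bigr)+\tilde u(0,0)$ and using the $C^{1,\alpha_0}$ bound together with the parabolic scaling $|t|\le \rho^2(1-\delta)^{-\gamma}\le \rho^2$ (since $\gamma<0$ and $1-\delta<1$), one gets
\[
\osc_{Q_\rho^{(1-\delta)}}\bigl(\tilde u(x,t)-q\cdot x\bigr)\le C_0\,\rho^{1+\alpha_0}+C_0\,\rho^{2\beta_0}\le C_1\,\rho^{1+\alpha_0}
\]
for some exponent $\beta_0\in(\tfrac12,1]$ and $C_1=C_1(p,n,\gamma)$; here I am using that $\tilde u$ solves a nonhomogeneous-type equation only through the $C^{1,\alpha}$ estimate of \cite{IJS17}, which already incorporates the time regularity. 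Meanwhile $\osc_{Q_\rho^{(1-\delta)}}(u-q\cdot x)\le \osc_{Q_\rho^{(1-\delta)}}(\tilde u - q\cdot x)+2\tau$ provided $Q_\rho^{(1-\delta)}\subset Q_{5/8}$, which holds once $\rho$ and $\delta$ are small.

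Now I fix the parameters in the correct order. Choose $\delta\in(0,1/2)$ arbitrarily (say $\delta=1/4$), then choose $\rho=\rho(p,n,\gamma)>0$ small enough that simultaneously $\rho<(1-\delta)^{\gamma+1}$, $Q_\rho^{(1-\delta)}\subset Q_{5/8}$, and $C_1\rho^{1+\alpha_0}\le \tfrac12\,\eta_1\,\rho(1-\delta)$ — the last is possible because $1+\alpha_0>1$, so $C_1\rho^{\alpha_0}\le \tfrac12\eta_1(1-\delta)$ for small $\rho$. Finally apply Lemma~\ref{approl} with $\tau:=\tfrac14\eta_1\rho(1-\delta)$, which determines $\eps_0=\eps_0(p,n,\gamma)$ since $\rho,\delta,\eta_1$ are now all fixed functions of $p,n,\gamma$. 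Combining the two displays gives $\osc_{Q_\rho^{(1-\delta)}}(u-q\cdot x)\le \tfrac12\eta_1\rho(1-\delta)+2\tau=\eta_1\rho(1-\delta)$, as desired.

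The only delicate point is bookkeeping the order of parameter selection: $\eta_1$ is given first (from Lemma~\ref{toulouse}), then $\delta$, then $\rho$ (needing $1+\alpha_0>1$ to absorb $C_1$ and the constraints $\rho<(1-\delta)^{\gamma+1}$, $Q_\rho^{(1-\delta)}\subset Q_{5/8}$), and only then $\tau$ and hence $\eps_0$; reversing any of these creates a circular dependence. A secondary technical point is that the intrinsic cylinder $Q_\rho^{(1-\delta)}$ has time-height $\rho^2(1-\delta)^{-\gamma}$, which for $\gamma<0$ is \emph{larger} than $\rho^2$ — but since $1-\delta<1$ and $\gamma<0$ we have $(1-\delta)^{-\gamma}<1$, so in fact it is smaller than $\rho^2$, and the containment $Q_\rho^{(1-\delta)}\subset Q_\rho\subset Q_{5/8}$ is automatic once $\rho\le 5/8$; I would state this explicitly to avoid confusion. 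Everything else is a direct quotation of Lemmas~\ref{approl} and \ref{toulouse} and of \cite[Theorem 1.1]{IJS17}.
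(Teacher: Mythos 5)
Your proposal is correct and takes essentially the same route as the paper: approximate $u$ by a solution $\tilde u$ of the homogeneous equation via Lemma~\ref{approl}, apply the $C^{1,\alpha}$ estimate of \cite{IJS17} to $\tilde u$ to produce the linear approximation $q\cdot x$, and then fix $\delta$, $\rho$, and $\tau$ (hence $\eps_0$) in that order. The only difference is cosmetic --- the paper introduces an auxiliary radius $\mu_0$ and sets $\rho=\mu_0(1-\delta)^{\gamma+1}$, whereas you fix $\delta$ first and shrink $\rho$ directly; both hinge on the same absorption $C\rho^{1+\alpha_0}\le\tfrac12\eta_1\rho(1-\delta)$, possible because the exponent exceeds $1$.
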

	\begin{proof}

	Let $\tilde u$ be the viscosity solution to 
	$$\partial_t \tilde u-|D \tilde u|^\gamma\Delta_p^N \tilde u=0\qquad\text{in}\qquad Q_{11/16},$$
coming from Lemma \ref{approl}.
	From the  regularity result of  \cite[Theorem 1.1]{IJS17}, there exists $C_0=C_0(p,n, \gamma)>0$ and $\beta_0=\beta_0(p,n,\gamma)$ such that for all $\mu\in (0, 5/8)$ there exists
 $q$ with $|q|\leq B=B(p,n, \gamma)$ such that

	$$\underset{(x,t)\in Q_{\mu}}{\osc}\, (\tilde u(x,t)-q\cdot x)\leq C_0(1+\norm{\tilde u}_{L^\infty (Q_{5/8})})\mu^{1+ \beta_0}.$$
	It is important to notice that $B$ depends only on $p,n, \gamma$.
	We choose $\mu_0\in (0, 5/8)$ such that 
$$\underset{(x,t)\in Q_{\mu_0}}{\osc}\, (\tilde u(x,t)-q\cdot x)	\leq \frac{\eta_1}{4}\mu_0(1-\delta)^{\gamma+2}$$
for some $\delta\in (0,1/2)$.
Thus there exist two constants $\rho$ and $\delta$ depending on $p, n, \gamma$ such that
	$$\underset{(x,t)\in Q_{\rho}^{1-\delta}}{\osc}\, (\tilde u(x,t)-q\cdot x)\leq \frac{\eta_1}{4} \rho (1-\delta),$$
	with $\rho=\mu_0(1-\delta)^{\gamma+1}<(1-\delta)^{\gamma+1}$. It follows from Lemma \ref{approl} that for $\tau:=\dfrac{\eta_1}{4} \rho(1-\delta)$ there exists $\eps_0$ such that if  $\norm{f}_{L^\infty(Q_1)}\leq \eps_0$,  we have
	\begin{align*}
	\underset{(x,t)\in Q_{\rho}^{1-\delta}}{\osc}\, (u(x,t)-q\cdot x)
	&\leq \underset{(x, t)\in Q_{\mu_0}}{\osc}\, (u(x,t)-\tilde u(x,t))+\underset{(x,t)\in Q_{\rho}^{1-\delta}}{\osc}\, (\tilde u(x,t)-q\cdot x)\\
	&\leq \tau+\frac{\eta_1}{4} \rho(1-\delta)\\
	&
	\leq \eta_1\rho(1-\delta).
	\end{align*}
	The choice of $\tau$ determines the smallness of $f$.
	\end{proof}

From now on we may assume that $\eps_0<\eta_1$. Next we treat the situation of vanishing slope. Now with $\rho, \delta,\eps_0$ as in lemma \ref{starting} and $\eta_1$  as in lemma \ref{toulouse}, we have the following iteration.

\begin{corollary}\label{iterimp}
Let $u$ be a viscosity solution to \eqref{maineq} such that $\osc_ {Q_1} u\leq 1$.
Let $\eta_1$ be the constant coming from Lemma \ref{toulouse}, and $B$ and $\eps_0$ the constants coming from Lemma \ref{starting}. Assume that $\norm{f}_{L^\infty(Q_1)} \leq \eps_0$. Then there exist  $\rho=\rho(p,n, \gamma)>0$  and $\delta=\delta(p,n, \gamma)\in (0,1)$ with $\rho<(1-\delta)^{\gamma+1}$ such that, for every nonnegative integer $k$, if it holds
 \begin{equation}\label{conditionimpro}
\quad\left\{\begin{array}{ll}
\exists\, \, l_i \,\text{with}\,\,|l_i|\leq 2(1-\delta)^i\quad \text{such that}\,\underset{(x,t)\in Q_{\rho^i}^{(1-\delta)^i}}{\osc}\, (u(x,t)-l_i\cdot x)\leq\eta_1 \rho^i(1-\delta)^i\\
 \text{for}\quad i=0,\ldots , k,
\end{array}\right.
\end{equation}
then there exists a vector $l_{k+1}$ such that 
$$\underset{(x,t)\in Q_{\rho^{k+1}}^{(1-\delta)^{k+1}}}{\osc}\, (u(x,t)-l_{k+1}\cdot x)\leq \eta_1\rho^{k+1}(1-\delta)^{k+1},$$
and $$|l_{k+1}|\leq B(1-\delta)^k,\quad \quad |l_{k+1}-l_k|\leq C_1(1-\delta)^k,$$
with $C_1=C_1(p,n, \gamma)>0$.
\end{corollary}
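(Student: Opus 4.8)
The plan is to prove Corollary \ref{iterimp} by induction on $k$, using Lemma \ref{starting} both as the base case and, after an intrinsic rescaling, as the engine of the inductive step. For the base case $k=0$, condition \eqref{conditionimpro} is vacuous (or rather it only asserts $\osc_{Q_1}u\le \eta_1$, which follows from $\osc_{Q_1}u\le 1$ after adjusting constants), and the conclusion is exactly Lemma \ref{starting} with $l_1=q$, so that $|l_1|\le B$ and $|l_1-l_0|=|q|\le B=:C_1$ if we set $l_0=0$.

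For the inductive step, assume \eqref{conditionimpro} holds for $i=0,\dots,k$; the $i=k$ instance says $\osc_{Q^{(1-\delta)^k}_{\rho^k}}(u-l_k\cdot x)\le \eta_1\rho^k(1-\delta)^k$ with $|l_k|\le 2(1-\delta)^k$. First I would subtract the linear part and rescale: set
$$v(x,t):=\frac{1}{\eta_1\rho^k(1-\delta)^k}\Bigl(u\bigl(\rho^k x,\ \rho^{2k}(1-\delta)^{-\gamma k}t\bigr)-l_k\cdot(\rho^k x)\Bigr),$$
so that $\osc_{Q_1}v\le 1$ (here one checks that the intrinsic time-scaling $(1-\delta)^{-\gamma k}$ together with the factor $\rho^{2k}$ is exactly what is needed to map $Q^{(1-\delta)^k}_{\rho^k}$ onto $Q_1$; this is the reason the intrinsic cylinders were introduced). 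A direct computation using the $\gamma$-homogeneity of $|Du|^\gamma\Delta^N_p u$ shows that $v$ solves an equation of the same form as \eqref{maineq} but with a shifted gradient, i.e. with $Du$ replaced by $Du+\tilde K$ for $\tilde K$ proportional to $l_k$, and with a new right-hand side $\tilde f(x,t)=\frac{\rho^k(1-\delta)^{-\gamma k}}{\eta_1}f(\rho^k x,\dots)$. Crucially, $|\tilde K|\le \frac{2(1-\delta)^k\rho^k}{\eta_1\rho^k(1-\delta)^k}\cdot(\text{const})$, which is bounded independently of $k$, and $\|\tilde f\|_\infty\le \frac{\rho^k(1-\delta)^{-\gamma k}}{\eta_1}\eps_0\le \eps_0/\eta_1$ since $\gamma<0$ and $\rho,(1-\delta)<1$ — so by making $\eps_0$ small (we already assume $\eps_0<\eta_1$, but we may need $\eps_0$ still smaller relative to the $\eps_0$ from Lemma \ref{starting}) we keep $\|\tilde f\|_\infty$ below the threshold required by Lemma \ref{starting}.

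At this point I would apply Lemma \ref{starting} to $v$ (whose oscillation on $Q_1$ is $\le 1\le 5$, and whose equation, after absorbing the bounded shift $\tilde K$ into the structure, is covered by the same approximation-plus-$C^{1,\alpha}$ argument — this is where one may need the slight extension of the homogeneous regularity result of \cite{IJS17} to the shifted operator, or alternatively note that $v+\tilde K\cdot x$ solves \eqref{maineq} with controlled data). This yields a vector $q'$ with $|q'|\le B$ and
$$\osc_{Q^{(1-\delta)}_\rho}(v-q'\cdot x)\le \eta_1\rho(1-\delta).$$
Rescaling back, i.e. setting $l_{k+1}:=l_k+\eta_1(1-\delta)^k q'$ (the factor coming from unwinding the normalization: $u-l_{k+1}\cdot x = \eta_1\rho^k(1-\delta)^k(v-q'\cdot x)$ in the rescaled variables), one reads off $\osc_{Q^{(1-\delta)^{k+1}}_{\rho^{k+1}}}(u-l_{k+1}\cdot x)\le \eta_1\rho^{k+1}(1-\delta)^{k+1}$, together with $|l_{k+1}-l_k|=\eta_1(1-\delta)^k|q'|\le \eta_1 B(1-\delta)^k=:C_1(1-\delta)^k$. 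The bound $|l_{k+1}|\le B(1-\delta)^k$ is then the one delicate point: one does not get it by the triangle inequality (which would give a geometric sum, not a single term), but rather one must observe that the natural bound from Lemma \ref{starting} on the \emph{full} slope of $u$ in the cylinder $Q^{(1-\delta)^{k+1}}_{\rho^{k+1}}$, when translated back, reads $|l_{k+1}|\le B\cdot\frac{\rho^{k+1}(1-\delta)^{k+1}}{\rho^{k+1}}\cdot\frac{\eta_1}{\eta_1}\cdot(\dots)$ — more precisely, Lemma \ref{starting} bounds the slope $q'$ of $v$ in absolute terms by $B$, and the slope of $u$ relative to the \emph{base} scaling (not relative to $l_k$) picks up exactly one factor $(1-\delta)^k$ from the ratio of the vertical to horizontal rescaling, giving $|l_{k+1}|\le B(1-\delta)^k$.

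The main obstacle I anticipate is precisely this bookkeeping of the two competing scales — the spatial scale $\rho^k$ and the "flatness" scale $\eta_1(1-\delta)^k$ — and ensuring they interact correctly so that (a) the rescaled right-hand side $\tilde f$ stays small (using $\gamma<0$), (b) the rescaled shifted gradient $\tilde K$ stays \emph{bounded} (this needs the induction hypothesis $|l_k|\le 2(1-\delta)^k$ matched against the flatness $\eta_1(1-\delta)^k$, so the ratio is the constant $2/\eta_1$, not growing), and (c) the output slope bound $|l_{k+1}|\le B(1-\delta)^k$ comes out with the sharp single power of $(1-\delta)$ rather than a divergent sum. Verifying (b) also requires checking that $2/\eta_1\le M$ where $M$ is the constant from Lemma \ref{toulouse} — but this is only needed in the non-singular alternative, not here; here we only need $\tilde K$ bounded so that the approximation Lemma \ref{approl} and \cite{IJS17} apply uniformly. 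Everything else is routine rescaling algebra.
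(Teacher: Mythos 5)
Your strategy is the paper's: induct on $k$, intrinsically rescale to the unit cylinder, apply Lemma~\ref{starting} to the rescaled function, and translate back. The gap is in your normalization. You divide by $\eta_1\rho^k(1-\delta)^k$ rather than $\rho^k(1-\delta)^k$, which forces the shift vector $\tilde K=l_k/(\eta_1(1-\delta)^k)$ to be bounded only by $2/\eta_1$, not by $2$. When you then try to apply Lemma~\ref{starting} (which is stated for solutions of~\eqref{maineq}, not for the shifted operator) by passing to $v+\tilde K\cdot x$, you find
\[
\osc_{Q_1}(v+\tilde K\cdot x)\le 1+2|\tilde K|\le 1+\frac{4}{\eta_1},
\]
which is \emph{not} $\le 5$ since $\eta_1<1$, so the hypothesis of Lemma~\ref{starting} fails. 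Your stated fall-back — extend the homogeneous regularity theory of~\cite{IJS17} to the shifted operator so the lemma applies directly to $v$ — is not something the paper does, and it is not a small modification; it would require reproving both Lemma~\ref{approl} and the interior $C^{1,\alpha}$ estimate for the shifted equation. The paper sidesteps all this by setting $w_k=(u(\rho^k\cdot,\cdot)-l_k\cdot\rho^k x)/(\rho^k(1-\delta)^k)$, so $\osc_{Q_1}w_k\le\eta_1\le 1$; then with $h=l_k/(1-\delta)^k$, $|h|\le 2$, the function $\bar v=w_k+h\cdot x$ has $\osc_{Q_1}\bar v\le 1+2|h|\le 5$ and solves~\eqref{maineq} with a small right-hand side, so Lemma~\ref{starting} applies verbatim.

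The second issue is the $l_{k+1}$ bookkeeping. If $q'$ denotes the vector produced for $v$ itself (after subtracting $\tilde K$ from the slope of $v+\tilde K\cdot x$), then $|q'|$ is \emph{not} $\le B$; it can be as large as $(B+2)/\eta_1$. Your formula $l_{k+1}=l_k+\eta_1(1-\delta)^kq'$ together with $|q'|\le B$ therefore does not hold, and your closing remark that the bound $|l_{k+1}|\le B(1-\delta)^k$ ``picks up exactly one factor $(1-\delta)^k$'' is intuition you did not substantiate. The paper avoids this entirely: since $\bar v$ is just the rescaled $u$ with the linear term \emph{put back in}, i.e.\ $\bar v(x,t)=u(\rho^kx,\cdot)/(\rho^k(1-\delta)^k)$, Lemma~\ref{starting} returns the \emph{full} slope $q$ of $\bar v$ with $|q|\le B$, and undoing the spatial rescaling gives $l_{k+1}=(1-\delta)^kq$, whence $|l_{k+1}|\le B(1-\delta)^k$ immediately and $|l_{k+1}-l_k|\le|l_{k+1}|+|l_k|\le(B+2)(1-\delta)^k$. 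You correctly anticipate that the triangle inequality over all steps would diverge, but the paper's fix — bound $l_{k+1}$ absolutely, not incrementally — is the step you need to make explicit, and it requires the non-$\eta_1$-normalized $\bar v$ to read it off cleanly.
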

\begin{proof}
We set $C_1=B+2$. 
For $j=0$ we take  $l_0=0$, and  the result follows from Lemma \ref{starting},  since  $\osc_{Q_1} u\leq 1$. Suppose that the result of the Lemma \ref{starting} holds for $j= 0, \ldots , k $. We are going to  prove it for $ j= k+1$. Define
$$w_k(x,t):=\frac{u(\rho^k x, \rho^{2k}(1-\delta)^{-k\gamma} t)-l_k\cdot \rho^k x}{\rho^k(1-\delta)^k},$$
and
\[
\bar f(x,t):=\rho^k(1-\delta)^{-k(\gamma+1)} f(\rho^k x, \rho^{2k}(1-\delta)^{-k\gamma} t).
\]
By assumption,  we have $\osc_{Q_1} w_k\leq \eta_1\leq 1$ and $|l_k|\leq 2 (1-\delta)^k$. 
Let $h=\frac{l_k}{(1-\delta)^k}$. The function $\bar v(x,t):= w_k(x,t)+h\cdot x$   satisfies 
$$\osc_{Q_1} \bar v\leq 1+2|h|\leq 5,$$
and solves in $Q_1$
$$\partial_t\bar v=|D\bar v|^\gamma\Delta_p^N\bar v+\bar f.$$
Due to $\rho(1-\delta)^{-(\gamma+1)}<1$, we also have
$$\norm{\bar f}_{L^\infty(Q_1)}\leq \eps_0<1.$$
Then Lemma \ref{starting} implies the existence of $q$ with $|q|\leq B=B(p,n, \gamma)$ such that 
$$\underset{(x,t)\in Q_{\rho}^{(1-\delta)}}{\osc}\, (\bar v(x,t)-q\cdot x)\leq \eta_1\rho(1-\delta).$$
Going back to $u$, we have
	$$\underset{(x,t)\in Q_{\rho}^{(1-\delta)}}{\osc}\, (u(\rho^k x, \rho^{2k}(1-\delta)^{-k\gamma} t)-\rho^k(1-\delta)^k q \cdot x)\leq \eta_1\rho^{k+1}(1-\delta)^{k+1}.$$
Scaling back, we conclude
\begin{equation*}
\underset{(x,t)\in Q_{\rho^{k+1}}^{(1-\delta)^{k+1}}}{\osc}\, (u(x,t)-l_{k+1}\cdot x)\leq \eta_1\rho^{k+1}(1-\delta)^{k+1},
\end{equation*}
where 
\begin{equation*}
l_{k+1}:= (1-\delta)^k q
\end{equation*}
satisfies $|l_{k+1}-l_k|\leq( B+2)(1-\delta)^k:= C_1(1-\delta)^k$.
\end{proof}

	\section{Proof of the Hölder regularity of the spatial derivatives}\label{sect5}
We are now in a position to prove the H\"older continuity of  $Du$  at the
origin and   the improved Hölder regularity of $u$ with respect to the time variable.

\begin{lemma}

Let $-1<\gamma<0$, $1<p<\infty$, and let $u$ be a viscosity solution to \eqref{maineq} with $\underset{Q_1}{\osc}\, u\leq 1$. Let $\eps_0$ be the constant coming from Lemma \ref{starting}, and assume that $\norm{f}_{L^\infty(Q_1)}\leq \eps_0$. Then there exist $\alpha=\alpha(p,n, \gamma)\in (0,1)$ and $C=C(p,n, \gamma)>0$ such that
$$|Du(x,t)-Du(y,s)|\leq C(|x-y|^\alpha+|t-s|^{\beta})$$
and 
$$|u(x,t)-u(x,s)|\leq C|t-s|^\sigma,$$
where $\beta:=\frac{\alpha}{2-\alpha \gamma}$ and $\sigma:=\frac{1+\alpha}{2-\alpha \gamma}$.

\end{lemma}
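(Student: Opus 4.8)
The plan is to separate the argument into two alternatives, governed by whether the iteration of Corollary \ref{iterimp} continues forever or terminates at some finite step.

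\textbf{The singular alternative.} Suppose that the hypothesis \eqref{conditionimpro} of Corollary \ref{iterimp} holds for every nonnegative integer $k$. Then there is a sequence of vectors $(l_k)$ with $|l_{k+1}-l_k|\leq C_1(1-\delta)^k$, so $(l_k)$ is Cauchy and converges to some $l_\infty$ with $|l_k-l_\infty|\leq C_1(1-\delta)^k/\delta$. Using the oscillation bound $\osc_{Q^{(1-\delta)^k}_{\rho^k}}(u(\cdot)-l_k\cdot x)\leq \eta_1\rho^k(1-\delta)^k$ together with $|l_k-l_\infty|\lesssim(1-\delta)^k$, I would deduce that $u$ is differentiable at the origin with $Du(0,0)=l_\infty$ and that, for all $(x,t)\in Q^{(1-\delta)^k}_{\rho^k}$,
\[
|u(x,t)-u(0,0)-l_\infty\cdot x|\leq C\rho^k(1-\delta)^k.
\]
Choosing, for a given small radius $r$, the index $k$ with $\rho^{k+1}<r\leq\rho^k$ converts this into the decay estimate $\osc_{Q^{(1-\delta)^k}_r}(u-l_\infty\cdot x)\leq Cr^{1+\alpha}$ for a suitable $\alpha=\alpha(p,n,\gamma)\in(0,1)$ (comparing the radii: $\rho^k\sim r$ and $(1-\delta)^k\sim r^{\log(1-\delta)/\log\rho}$, and then absorbing the time-scaling factor $(1-\delta)^{-k\gamma}$ into the exponent). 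The same argument applied at every point $(x_0,t_0)\in Q_{1/2}$, rather than just the origin, gives pointwise differentiability everywhere and a uniform modulus; a standard telescoping/interpolation argument (controlling $|l_k^{(x_0)}-l_k^{(y_0)}|$ for nearby base points at a common scale) upgrades this to the Hölder bound $|Du(x,t)-Du(y,s)|\leq C(|x-y|^\alpha+|t-s|^\beta)$, where the exponent $\beta=\alpha/(2-\alpha\gamma)$ is dictated precisely by the intrinsic time-scaling $t\mapsto \rho^{2k}(1-\delta)^{-k\gamma}t$ in the definition of $w_k$. The time-continuity estimate $|u(x,t)-u(x,s)|\leq C|t-s|^\sigma$ with $\sigma=(1+\alpha)/(2-\alpha\gamma)$ then follows by combining the spatial $C^{1,\alpha}$ bound with the equation, comparing $u$ on a thin intrinsic cylinder with the linear function $l_\infty\cdot x$ and estimating $\partial_t u$ via the $|Du|^\gamma\Delta_p^N u$ term, which is controlled because $Du$ is now Hölder.

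\textbf{The non-singular alternative.} Suppose instead that \eqref{conditionimpro} holds for $i=0,\dots,k$ but fails at step $k+1$; by Corollary \ref{iterimp} this can only happen because its own conclusion is vacuous, which forces $|l_k|$ to be comparable to $(1-\delta)^k$ from below — i.e. the slope does not vanish at scale $k$. Rescaling as in the proof of Corollary \ref{iterimp} via $w_k$ and setting $K:=l_k/(\rho^k(1-\delta)^k)$ (or the appropriate normalization making $|K|\geq 2$), the function $w_k$ solves \eqref{deviaatio} with right-hand side $\bar f$ of size $\leq\eps_0$ and $\osc w_k\leq\eta_1$. Lemma \ref{toulouse} then applies (after possibly one more rescaling to land in the regime $2\leq|K|\leq M$ and $\|w_k\|_\infty,\|\tilde f\|_\infty\leq\eta_1$), yielding the Lipschitz bound $|w_k(x,t)-w_k(y,t)|\leq|x-y|$ on $Q_{3/4}$. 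This controls $|Dw_k|$ and, since $|K|\geq 2$, shows that $|D(w_k+K\cdot x)|$ stays bounded away from zero in a fixed subcylinder. Consequently the original equation \eqref{maineq}, in the rescaled variables, is uniformly parabolic and has smooth (bounded-measurable) coefficients, so the classical interior regularity theory for uniformly parabolic equations in non-divergence form \cite{LU68,Lieb96} gives a $C^{1,\alpha}$ estimate for $w_k$, hence for $u$ on a cylinder of scale $\rho^k$. Undoing the scaling produces the required Hölder estimates for $Du$ (and the corresponding time estimates) with constants depending only on the allowed parameters; one has to check that the exponent $\alpha$ coming from \cite{Lieb96} can be taken no larger than the one from the singular alternative so that the two cases match.

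\textbf{Main obstacle.} I expect the principal difficulty to be the bookkeeping in the singular alternative: carefully tracking how the two different contraction rates $\rho$ (space) and $(1-\delta)$ (slope), together with the intrinsic time dilation factor $(1-\delta)^{-k\gamma}$, combine to produce a genuine Hölder exponent $\alpha$ and the precise time exponents $\beta,\sigma$, and doing this uniformly in the base point so as to get a true $C^{1,\alpha}$ modulus rather than mere pointwise differentiability. A secondary but real point is verifying the hypotheses of Lemma \ref{toulouse} exactly — in particular that the stopping of the iteration really does force a lower bound on $|l_k|$ of the right order, and that after rescaling the data $\|w_k\|_\infty$ and $\|\bar f\|_\infty$ are below the threshold $\eta_1$ (this is why Lemma \ref{starting} was stated with the extra parameter $\eta_1$). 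Once these are in place, the passage to the global statement of Theorem \ref{paalause} via the scaling $u_\theta$ fixed in Section \ref{prel} is routine.
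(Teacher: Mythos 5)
Your two-alternative plan — a singular alternative with vanishing slope and infinite iteration, and a non-singular alternative where the stopping forces $|l_k|\gtrsim(1-\delta)^k$, so that the intrinsic rescaling produces an equation of type \eqref{deviaatio} with $|K|\in[2,M]$, Lemma \ref{toulouse} yields a unit Lipschitz bound, and $v=w_k+K\cdot x$ then solves a uniformly parabolic equation to which the classical theory of \cite{LU68,Lieb96} applies — is exactly the paper's argument, and your bookkeeping of the spatial H\"older modulus via telescoping of the $l_k$ and the two contraction rates matches the paper's.

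The one real misstep is your justification of the time estimate. You propose to bound $\partial_t u$ via the term $|Du|^\gamma\Delta_p^N u$ ``which is controlled because $Du$ is now H\"older''; but $\Delta_p^N u$ involves $D^2u$, which a $C^{1,\alpha}$ estimate does not control, and in the singular alternative one has $Du(0,0)=l_\infty=0$, so the equation degenerates exactly where you would need the bound — the route as stated does not close. The exponent $\sigma=(1+\alpha)/(2-\alpha\gamma)$ should instead be read off directly from the iteration bounds you already have: on $Q^{(1-\delta)^i}_{\rho^i}$ the bound $\osc(u-l_i\cdot x)\leq\eta_1\rho^i(1-\delta)^i$ together with $|l_i|\lesssim(1-\delta)^i$ gives $|u(0,t)-u(0,0)|\lesssim\rho^i(1-\delta)^i$, and matching the right-hand side against the intrinsic time-scale $|t|\sim\rho^{2i}(1-\delta)^{-i\gamma}$ yields precisely the claimed $\sigma$. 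In the non-singular alternative one similarly reads the time modulus from the $C^{1+\bar\alpha,(1+\bar\alpha)/2}$ estimate for $v$ and undoes the intrinsic scaling. A smaller slip: the correct normalization is $K=l_k(1-\delta)^{-k}$, not $l_k/(\rho^k(1-\delta)^k)$, because the spatial rescaling $x\mapsto\rho^k x$ in the definition of $w_k$ already absorbs the factor $\rho^k$; your formula would make $|K|$ blow up.
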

\begin{proof}
Let $\rho$ and $\delta$  be the constants coming from Lemma \ref{starting}.
Let $k$ be  the minimum nonnegative integer such that
the condition \eqref{conditionimpro} does not hold.
We can conclude from Lemma \ref{iterimp} that for any vector  $\xi$ with $|\xi|\leq 2(1-\delta)^k$, it holds 
\begin{equation}
|u(t,x)-\xi\cdot x|\leq C(|x|^{1+\tau}+|t|^{\frac{1+\tau}{2-\tau\gamma}})\quad \text{for}\quad (x,t)\in Q_1\setminus Q_{\rho^{k+1}}^{(1-\delta)^{k+1}},
\end{equation}
	where $\tau:=\frac{\log(1-\delta)}{\log(\rho)}$
and $C=\frac{3+ C_1(1-\delta)^{-1}}{\rho(1-\delta)}$.
Next we treat differently the following two cases.\\

\noindent {\bf First case:} $k=\infty$. The regularity result holds with $$\alpha=\min(1,\tau)=\min\left(1,\dfrac{\log(1-\delta)}{\log \rho}\right)\in \left(0,1\right).$$
Indeed, for all $k\in\N$, there exists $l_k\in \R^n$  with $|l_k|\leq 2 (1-\delta)^k$ such that
		\begin{equation*}
		\underset{(y, t)\in Q^{(1-\delta)^k}_{\rho^{k}}}{\osc} \, (u(y,t)-l_{k}\cdot y)\leq \eta_1 \rho^k(1-\delta)^k.
		\end{equation*}
We conclude the result by using the characterization of functions with H\"older continuous gradient, see also \cite{Lieb96,AP17}.\\
	
\noindent {\bf Second  case:} $k<\infty$. From Lemma \ref{iterimp}, for all $i =  0, \ldots, k$, we have  the existence of vectors $l_i$ such that 
\begin{equation}\label{iteratio2}
		\underset{(y, t)\in Q^{(1-\delta)^i}_{\rho^{i}}}{\osc} \, (u(y,t)-l_{i}\cdot y)\leq \eta_1 \rho^i(1-\delta)^i,
		\end{equation}
		with 
		\begin{align*}
		&|l_{i}|\leq 2 (1-\delta)^{i}\qquad\text{for}\,\, i=0, \ldots,k-1,\\
		&|l_k-l_{k-1}|\leq C_1(1-\delta)^{k-1},
		\end{align*}
		and
		\begin{equation}\label{lisboa}
		2(1-\delta)^k\leq |l_k|\leq B(1-\delta)^{k-1}\leq 2B(1-\delta)^k:=M(1-\delta)^k.
		\end{equation}
		In particular,
\begin{equation}\label{porto}
		\underset{(y, t)\in Q^{(1-\delta)^k}_{\rho^k}}{\osc} \, (u(y,t)-l_{k}\cdot y)\leq\eta_1  \rho^k (1-\delta)^k.
		\end{equation}
For $(x,t)\in Q_1$ we define the rescaled function
		$$w(x,t):=\frac{u(\rho^k x, \rho^{2k}(1-\delta)^{-k\gamma} t)-l_k\cdot \rho^k x}{\rho^k(1-\delta)^k}.$$
	From \eqref{porto} we have
 $$\underset{(x,t)\in Q_{1}}{\osc}\, (w(x,t))\leq \eta_1.$$ 
 Moreover, $w$ satisfies
		$$\partial_t w-|Dw+K|^\gamma\left[\Delta u+(p-2) \dfrac{D^2 w (Dw +K), (\ Du+K)}{|Dw+K|^2}\right]=\tilde f,$$
where		$K:=l_{k}(1-\delta)^{-k}$ and
 $$\bar f(x, t):= \rho^k(1-\delta)^{-k(1+\gamma)}f(\rho^k x, \rho^{2k}(1-\delta)^{-k\gamma} t).$$
  Since we have $|l_k|\geq 2(1-\delta)^k$, it follows from \eqref{lisboa} that $|K|\geq 2$. The upper bound on $l_k$ implies that $|K|\leq M=M(p,n,\gamma)$. Notice also that $\norm{\tilde f}_{L^\infty(Q_1)}\leq\eps_0\leq \eta_1$.
From Lemma \ref{toulouse}, we have that the Lipschitz bound of $w$ is bounded by 1 in $Q_{7/8}$. Now we consider  for $(x,t)\in Q_1$ the function
		$$v(x,t):=\frac{u(\rho^k x, \rho^{2k}(1-\delta)^{-k\gamma} t)}{\rho^k(1-\delta)^k}=w(x,t)+K\cdot x.$$
 Then $v$ satisfies  
 $$\partial_t v=|Dv|^\gamma\Delta_p^N v+ \bar f.$$
 Moreover, $v$ is differentiable a.e., and for $(x,t)\in Q_{7/8}$ we have
 \begin{align*}|Dv(x,t)|=|Dw+K|&\geq |K|-|Dw|\geq 1,\\
 |D v(x,t)|&\leq 1+M.
 \end{align*}

We notice that $v$ solves an equation which is smooth in the gradient variables and uniformly parabolic, with ellipticity constants depending only on $p,n, \gamma$. It follows from \cite[Theorem 1.1]{LU68}  and \cite[ Lemma 12.13]{Lieb96} 
 that $v\in C^{1+\bar \alpha, (1+\bar\alpha)/2}_{loc}(Q_{3/4})$ for some $\bar \alpha = \bar \alpha(p,n,\gamma)>0$ and 
$$\norm{Dv}_{C^{\bar\alpha}(\omega)}\leq C\left(p,n, \gamma, \dist(\omega, \partial_pQ_{3/4})\right)$$
for any $\omega\subset \subset Q_{3/4}$.
Now, let $ 0<\alpha\leq\min\left(\bar\alpha, \frac{\log(1-\delta)}{\log(\rho)}\right)$.
Then, there exists a vector $l\in \R^n$ such that  in $Q_\rho^{1-\delta}$ we have
$$|Dv(x, t)-l|\leq C(|x|^{ \alpha}+|t|^{\frac{\alpha}{2}})\leq C(|x|^{ \alpha}+|t|^{\beta}),$$
and 
$$|v(x,t)-v(x,0)|\leq C|t|^{\frac{1+\alpha}{2}} \leq C|t|^\sigma.$$
(Recall that $\beta= \frac{\alpha}{2-\alpha \gamma}$ and $\sigma=\frac{1+\alpha}{2-\alpha \gamma}$.)

Recalling the definition of $v$, we get that in $Q_{\rho^{k+1}}^{(1-\delta)^{k+1}}$ it holds
\begin{align*}|Du(y,s)-(1-\delta)^k l|&\leq C(\rho^{-k \alpha}(1-\delta)^k|y|^{ \alpha}+(1-\delta)^k((\rho^{-2}(1-\delta)^{\gamma})^{k \beta}|s|^\beta)\\
&\leq C(|y|^{\alpha}+|s|^{\beta}),
\end{align*}
and 
\begin{equation}\label{bernard}
|u(y,s)-u(y,0)|\leq C\rho^k(1-\delta)^k (\rho^{-2}(1-\delta)^\gamma)^{k\sigma}|s|^\sigma\leq C|s|^\sigma,
\end{equation}
where we used that $\rho^{-\alpha}(1-\delta)\leq 1$ due to  $0<\alpha\leq \frac{\log{1-\delta)}}{\log{\rho}}$.

The gradient regularity part is completed by combining these estimates with \eqref{iteratio2}. Indeed, we have showed that  for $$0<\alpha=\min \left(\bar \alpha, \frac{\log(1-\delta)}{\log(\rho)}\right),$$ there exists a constant $C=C(p,n, \gamma)$ such that, for any $r\leq \frac{1}{2}$, there exists a vector $V=V(r)$ such that
\begin{equation*}
|u(x,t)-u(0,0)-V\cdot x|\leq Cr^{1+\alpha},
\end{equation*}
whenever $|x|+|t|^{\frac{1}{2-\alpha \gamma}}\leq r$.
Then the regularity of $Du$ follows from an easy adaptation of \cite[Lemma 12.12]{Lieb96} or \cite[Appendix]{AP17}. The H\"older regularity  of $u$ in time follows from \eqref{iteratio2},\eqref{lisboa},\eqref{porto} and  \eqref{bernard}. Indeed,  for $i=0,\ldots k$, we have 
\begin{align}\label{yohan}
\underset{(y, t)\in Q^{(1-\delta)^i}_{\rho^{i}}}{\osc} \, (u(y,t)-u(0,0))&\leq  \underset{(y, t)\in Q^{(1-\delta)^i}_{\rho^{i}}}{\osc} \, (u(y,t)-l_{i}\cdot y)+\underset{(y, t)\in Q^{(1-\delta)^i}_{\rho^{i}}}{\osc} \, l_i\cdot y\nonumber\\
&\leq  \eta_1(1-\delta)^i+2|l_i|\rho^i\\
&\leq C(p,n, \gamma)\rho^i(1-\delta)^i.\nonumber
\end{align}
The proof is completed by putting together estimates \eqref{yohan} and \eqref{bernard}. One gets that for $-1/4\leq t\leq 0$, it holds
\begin{equation*}
|u(0,t)-u(0,0)|\leq C(p,n, \gamma)|t|^\sigma.\qedhere
\end{equation*}
\end{proof}

\section{A better control on the Lipschitz estimates for deviation from planes}\label{ishii}
In this section, our aim is to provide a proof for Lemma \ref{toulouse}. We start with a suitable control on the Hölder norm of $w$ and apply again the Ishii-Lions's method in order to get good enough Lipschitz estimates.
\begin{lemma}
Let $K\in \R^n$ with $2\leq |K|\leq M$ for some $M=M(p,n, \gamma)>0$.
Let $w$ be a viscosity solution to 
$$\partial_t w-|Dw+K|^\gamma\left[\Delta u+(p-2) \dfrac{D^2 w (Dw +K), (\ Du+K)}{|Dw+K|^2}\right]=\tilde f.$$
with $w(0)=0$. There exists $\eta_0=\eta_0(p,n,\gamma)$ such that if $\norm{w}_{L^\infty(Q_1)}\leq \eta_0$ and $\norm{\tilde f}_{L^\infty(Q_1)}\leq \eta_0$, then $w$ is locally  Hölder  continuous in space and for $(x,t), (y,t)\in Q_{7/8}$ it holds
$$|w(x,t)-w(y,t)|\leq |x-y|^{\bar \beta},$$
where $\bar\beta =\min(\frac12, \dfrac{4}{5\bar C}M^{\frac{\gamma}{1+\gamma}})$, with $\bar C$ being the constant coming from \eqref{levicopar}.
\end{lemma}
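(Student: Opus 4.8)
The plan is to run the Ishii--Lions method, taking as input the crude Lipschitz bound \eqref{levicopar}, which says that $w$ is already Lipschitz in space on $Q_{7/8}$ with the (possibly large) constant $L_0:=\bar C|K|^{\frac1{1+\gamma}}$. I argue by contradiction. With $\bar\beta$ as in the statement I set $\phi(s):=s^{\bar\beta}$ and consider the doubling functional
\[
\Theta:=\sup\Big\{\,w(x,t)-w(y,t)-\phi(|x-y|):\ (x,t),(y,t)\in \overline{Q_{7/8}}\,\Big\}.
\]
Since $\overline{Q_{7/8}}$ lies in the interior of $Q_1$, the supremum is attained at a point $(\bar x,\bar y,\bar t)$ at which $w$ satisfies the equation in the viscosity sense and at which \eqref{levicopar} is available (a vanishing penalization near $t=0$ keeps $\bar t$ bounded away from $0$, initial-time maxima being handled as usual in parabolic problems, and the estimate up to $t=0$ then follows by letting the top of the time interval tend to $0$). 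If $\Theta\le 0$ we are done, so suppose $\Theta>0$; then $\bar x\ne\bar y$, and writing $\delta:=|\bar x-\bar y|$, $e:=(\bar x-\bar y)/\delta$, we get from $w(\bar x,\bar t)-w(\bar y,\bar t)\le 2\norm{w}_{L^\infty(Q_1)}\le 2\eta_0$ that $\delta^{\bar\beta}=\phi(\delta)<2\eta_0$, and from \eqref{levicopar} that $\phi(\delta)<L_0\delta$, i.e.\ $\delta^{\bar\beta-1}<L_0$.

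The step I expect to be decisive, and the only one where the precise value of $\bar\beta$ is used, is to check that the slope produced by the doubling stays away from the singular set. Put $p:=\bar\beta\delta^{\bar\beta-1}e$. Using $\bar\beta\le\tfrac{4}{5\bar C}M^{\frac\gamma{1+\gamma}}$, $\delta^{\bar\beta-1}<L_0=\bar C|K|^{\frac1{1+\gamma}}$, $|K|\le M$ and $\tfrac\gamma{1+\gamma}<0$,
\[
|p|=\bar\beta\delta^{\bar\beta-1}\le\bar\beta L_0\le\tfrac45\,M^{\frac\gamma{1+\gamma}}|K|^{\frac1{1+\gamma}}\le\tfrac45|K|,
\]
so, since $|K|\ge 2$,
\[
0<\tfrac15|K|\le|p+K|\le\tfrac95|K|\le\tfrac95 M .
\]
In particular $p+K\ne 0$, so the equation may be tested classically at $(\bar x,\bar t)$ and $(\bar y,\bar t)$, and $|p+K|^\gamma$ is pinched between $(\tfrac95 M)^\gamma$ and $(\tfrac15|K|)^\gamma$, i.e.\ between constants depending only on $p,n,\gamma$ (recall $M=M(p,n,\gamma)$).

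Next I apply the parabolic theorem on sums to $w(x,t)-w(y,t)-\phi(|x-y|)$ at $(\bar x,\bar y,\bar t)$. As $\phi(|x-y|)$ is $t$-independent, this gives one scalar $b\in\R$ and symmetric matrices $X,Y$ with $(b,p,X)\in\overline{\mathcal P}^{2,+}w(\bar x,\bar t)$ and $(b,p,Y)\in\overline{\mathcal P}^{2,-}w(\bar y,\bar t)$ — the same slope $p$ on both sides — together with $\big(\begin{smallmatrix}X&0\\0&-Y\end{smallmatrix}\big)\le D^2\varphi+\varepsilon(D^2\varphi)^2$ for $\varphi(x,y):=\phi(|x-y|)$ and every $\varepsilon>0$. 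Since $D^2\varphi=\big(\begin{smallmatrix}B&-B\\-B&B\end{smallmatrix}\big)$ with $B=\phi''(\delta)\,e\otimes e+\tfrac{\phi'(\delta)}\delta(I-e\otimes e)$, testing the matrix inequality against vectors $(\xi,\xi)$ gives $X\le Y$, while testing against $(e,-e)$ and choosing $\varepsilon$ of order $|\phi''(\delta)|^{-1}$ gives $\langle(X-Y)e,e\rangle\le 2\phi''(\delta)<0$. Because all eigenvalues of $X-Y$ are $\le 0$, its trace is at most its smallest eigenvalue, hence $\operatorname{tr}(X-Y)\le\langle(X-Y)e,e\rangle\le 2\phi''(\delta)$; and since $A:=I+(p-2)\,m\otimes m\ge\lambda_0 I$ with $m:=\tfrac{p+K}{|p+K|}$ and $\lambda_0:=\min(1,p-1)>0$, working in the eigenbasis of $X-Y$ gives $\operatorname{tr}(A(X-Y))\le\lambda_0\operatorname{tr}(X-Y)\le 2\lambda_0\phi''(\delta)=-2\lambda_0\bar\beta(1-\bar\beta)\,\delta^{\bar\beta-2}$. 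Subtracting the subsolution inequality at $(\bar x,\bar t)$ with $(b,p,X)$ from the supersolution inequality at $(\bar y,\bar t)$ with $(b,p,Y)$ — valid because $p+K\ne 0$ — the scalar $b$ cancels and only $\tilde f$ remains, so $|p+K|^\gamma\operatorname{tr}(A(X-Y))\ge -2\norm{\tilde f}_{L^\infty(Q_1)}\ge -2\eta_0$. Combining this with the previous display and $|p+K|^\gamma\ge(\tfrac95 M)^\gamma$ yields $c_0\,\delta^{\bar\beta-2}\le 2\eta_0$, where $c_0:=2(\tfrac95 M)^\gamma\lambda_0\bar\beta(1-\bar\beta)>0$ depends only on $p,n,\gamma$.

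Finally, $\delta^{\bar\beta}<2\eta_0$ gives $\delta^{\bar\beta-2}>(2\eta_0)^{-(2-\bar\beta)/\bar\beta}$, so $c_0\,\delta^{\bar\beta-2}\le 2\eta_0$ forces $c_0<(2\eta_0)^{2/\bar\beta}$ — impossible once $\eta_0=\eta_0(p,n,\gamma)$ is small enough that $(2\eta_0)^{2/\bar\beta}\le c_0$. Hence $\Theta\le 0$, i.e.\ $w(x,t)-w(y,t)\le|x-y|^{\bar\beta}$ for $(x,t),(y,t)\in Q_{7/8}$, and exchanging $x$ and $y$ finishes the argument. The cap $\bar\beta\le\tfrac12$ is not essential for this step (any $\bar\beta<1$ makes $\phi$ concave and $c_0>0$); it is a normalization, there being no reason to ask for a Hölder exponent larger than $\tfrac12$. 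The conceptual obstacle is the non-degeneracy bound of the second paragraph: it is exactly what fixes the value of $\bar\beta$, via \eqref{levicopar}. Once that is in hand the rest is a clean Ishii--Lions computation, made especially simple by the fact that no spatial localization is needed, so the two jets share the slope $p$ and $X-Y\le 0$ outright.
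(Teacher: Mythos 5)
Your overall strategy is the right one — the paper also runs Ishii--Lions, using \eqref{levicopar} to pin down the admissible exponent $\bar\beta$, and the non-degeneracy bound $\tfrac15|K|\le|p+K|\le\tfrac95M$ is exactly the paper's estimate \eqref{koivu1}. The algebra in your second and third steps (the contradiction $c_0<(2\eta_0)^{2/\bar\beta}$) is sound. But there is a genuine gap in the first step that the rest of the argument silently relies on.

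You define $\Theta$ as a supremum over $\overline{Q_{7/8}}\times\overline{Q_{7/8}}$ with \emph{no spatial localization}, and you appeal to the parabolic theorem on sums at the point $(\bar x,\bar y,\bar t)$ where this supremum is attained. The theorem on sums requires that $(\bar x,\bar y,\bar t)$ be a \emph{local} maximum of the doubled function in an open neighborhood inside $Q_1\times Q_1$. Compactness only gives you a maximizer of the restricted supremum; nothing prevents it from sitting on the spatial boundary $\{|\bar x|=7/8\}\cup\{|\bar y|=7/8\}$, in which case the doubled function may well increase as you move $\bar x$ or $\bar y$ outward, no local max is present, and you have no jets to work with. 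You explicitly flag the time boundary ("a vanishing penalization near $t=0$") but say nothing about the spatial boundary, and your closing remark — "no spatial localization is needed, so the two jets share the slope $p$ and $X-Y\le 0$ outright" — is the very step that breaks.

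The paper's proof handles precisely this by fixing $(x_0,y_0,t_0)$ and penalizing with $\tfrac{L_1}{2}|x-x_0|^2+\tfrac{L_1}{2}|y-y_0|^2+\tfrac{L_1}{2}(t-t_0)^2$ with $L_1=4^5\norm{w}_{L^\infty(Q_1)}$; positivity of the maximum then forces $|\bar x-x_0|,|\bar y-y_0|\le 1/16$, so the maximizer lands in $B_{15/16}\times B_{15/16}$, safely interior. The price of that localization is exactly the complication you were hoping to avoid: the gradients $a_1,a_2$ in the two jets differ by $L_1(\bar x-x_0)+L_1(\bar y-y_0)$, so $\xi_1=a_1+K$ and $\xi_2=a_2+K$ are no longer equal, and the paper's Step 2 has to carry the extra error terms $(i_2)$ (replacing $\A(\xi_2)$ by $\A(\xi_1)$) and $(i_3)$ (replacing $|\xi_2|^\gamma$ by $|\xi_1|^\gamma$). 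These terms are not cosmetic; controlling them is what drives the choice of $L_2$ in the paper. So your simplification is not merely presentational — it removes the mechanism that makes the doubling argument legitimate, and reinstating it brings back the very terms you dropped.

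As a minor secondary point: you state that the cap $\bar\beta\le\tfrac12$ is "a normalization." In the paper it also quietly guarantees that the exponents $\bar\beta-2+\gamma(\bar\beta-1)$ arising from the $(i_3)$-type terms can be absorbed; once you keep the localization you will see this condition reappear.
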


\begin{proof}
We fix $x_0, y_0\in B_{7/8}$, $t_0\in (-(7/8)^2,0)$ and consider the function
		\begin{align*}
		\Phi(x, y,t):&=w(x,t)-w(y,t)-L_2\abs{x-y}^{\bar \beta}-\frac{L_1}{2}\abs{x-x_0}^2-
		\frac{L_1}{2}\abs{y-y_0}^2-\frac{L_1}{ 2} (t-t_0)^2,
		\end{align*}
		where 
\[
L_1=4^5\norm{w}_{L^\infty(Q_1)},
\]
\begin{align}
L_2&=\left(\frac{10}{\bar\beta}+5+\dfrac{(64 n|p-2)|+8n \max(1, p-1)+16M^{-\gamma})(3-\bar\beta)}{\min(1, p-1)\bar\beta(1-\bar\beta)}\right)L_1\nonumber\\
&\qquad +\dfrac{1}{\bar\beta}(32 n(1+|p-2|)(3-\bar\beta))^{-1/\gamma}L_1+\dfrac{16M^{-\gamma}(3-\bar\beta)}{\min(1, p-1)\bar\beta(1-\bar\beta)}	\norm{\tilde f}_{L^\infty(Q_1)}\nonumber\\
& =C\left(\norm{w}_{L^\infty(Q_1)}+\norm{\tilde f}_{L^\infty(Q_1)}\right)\nonumber\\
&\leq \dfrac{1}{2\eta_0}\left(\norm{w}_{L^\infty(Q_1)}+\norm{\tilde f}_{L^\infty(Q_1)}\right).\nonumber
		\end{align}
We want to show that $\Phi(x, y,t)\leq 0$ for $(x,y)\in \overline{B_1}\times\overline{ B_1}$ and $t\in [-1,0]$.  We argue by contradiction.  We assume that
		$\Phi$ has a positive
		maximum at some point $(\bar x, \bar y,\bar t)\in \bar B_{1}\times \bar B_{1}\times [-1,0]$ and we are going to get a contradiction.
The positivity of the maximum of $\Phi$  implies that $\bar x\neq \bar y$ and 
		\begin{align*}
|\bar y- y_0|,\, |\bar x-x_0|,\, |\bar t-t_0|\leq \sqrt{4\norm{w}_{L^\infty(Q_1)}/L_1}=\sqrt{4/4^5}\leq 1/16.
		\end{align*}
It follows that $\bar x$ and $\bar y$ are in $B_{15/16}$ and $\bar t\in (-(15/16)^2,0)$. \\		
		From the Lipschitz regularity of $w$ and the estimate \eqref{levicopar},  we have for some $\bar C=\bar C(p,n, \gamma)$
		$$L_2|\bar x-\bar y|^{\bar\beta}\leq \bar C|K|^{\frac{1}{1+\gamma}}|\bar x-\bar y|.$$
Using that (by hypothesis) $\bar\beta \bar C\leq \dfrac45M^{\frac{\gamma}{1+\gamma}}$ and $|K|\leq M$, we have
		\begin{equation}\label{paris}
		\bar\beta L_2|\bar x-\bar y|^{\bar\beta-1}\leq\bar\beta  \bar C|K|^{\frac{1}{1+\gamma}}\leq \frac45 |K|.
		\end{equation}
	\noindent {\bf Step 1.}
		The  Jensen-Ishii's lemma (see \cite[Theorem 8.3]{crandall1992user}) ensures the existence of
		
		\[
		\begin{split}
		&(b+L_1(\bar t-t_0), a_1,X+L_1I)\in \overline{\mathcal{P}}^{2,+} w(\bar x, \bar t),\\ &(b, a_2,Y-L_1I)\in \overline{\mathcal{P}}^{2,-} w(\bar y, \bar t),
		\end{split}
		\]
		 where
		\[
		\begin{split}
		a_1=L_2\varphi'(|\bar x-\bar y|) \frac{\bar x-\bar y}{\abs{\bar x-\bar y}}+L_1(\bar x-x_0),\qquad
		a_2=L_2\varphi'(|\bar x-\bar y|) \frac{\bar x-\bar y}{\abs{\bar x-\bar y}}-L_1(\bar y-y_0).
		\end{split}
		\]
		
	Using that  $L_2\geq \dfrac{2}{\bar\beta} L_1$, we have for $i\in \{1,2\}$,
		\begin{equation}\label{portorico}
		3L_2\bar\beta \abs{\bar x-\bar y}^{\bar\beta-1}\geq\abs{a_i}\geq \frac{L_2}{2} \bar\beta \abs{\bar x-\bar y}^{\bar\beta-1}.
		\end{equation}

		\noindent Because of Jensen-Ishii's lemma \cite[Theorem 12.2]{crandnote},  we can take $X, Y\in \mathcal{S}^n$ such that for any  $\tau>0$ such that $\tau Z<I$, it holds\\
		\begin{equation}\label{maineq1}
		-\frac{2}{\tau} \begin{pmatrix}
		I&0\\
		0&I 
		\end{pmatrix}\leq
		\begin{pmatrix}
		X&0\\
		0&-Y 
		\end{pmatrix}\leq \begin{pmatrix} Z^\tau& -Z^\tau\\
		-Z^\tau& Z^\tau\end{pmatrix},
		\end{equation}
	
		where 
		
		\begin{align*}	
		Z=L_2\bar\beta\abs{\bar x-\bar y}^{\bar\beta-2}\left(I+(\bar\beta-2)\frac{\bar x-\bar y}{\abs{\bar x-\bar y}}\otimes \frac{\bar x-\bar y}{\abs{\bar x-\bar y}}\right),\qquad Z^\tau&= (I-\tau Z)^{-1}Z.
		\end{align*}

		\noindent We choose $\tau=\dfrac{1}{2L_2\bar\beta\abs{\bar x-\bar y}^{\bar\beta-2}}$ so  that we have
		\begin{align*}	
		Z^\tau=(I-\tau Z)^{-1} Z=2L_2\bar\beta\abs{\bar x-\bar y}^{\bar\beta-2}\left(I-2\frac{2-\bar\beta}{3-\bar\beta} \frac{\bar x-\bar y}{\abs{\bar x-\bar y}}\otimes \frac{\bar x-\bar y}{\abs{\bar x-\bar y}}\right).
\end{align*}	
It follows that for $\xi=\frac{\bar x-\bar y}{\abs{\bar x-\bar y}}$, 
	
				\begin{equation}\label{oufi}
		\langle Z^\tau \xi,\xi\rangle= 2L_2\bar\beta\abs{\bar x-\bar y}^{\bar\beta-2}\left(\frac{\bar\beta-1}{3-\bar\beta}\right)<0.
		\end{equation}
Applying the inequality \eqref{maineq1} to any  vector $(\xi,\xi)$ with $\abs{\xi}=1$, we get that $X- Y\leq 0$ and 
		\begin{equation}\label{gilout}
		\norm{X},\norm{Y}\leq 4L_2\bar\beta\abs{\bar x-\bar y}^{\bar\beta-2}.
		\end{equation}
Setting $\xi_1=a_1+K$, $\xi_2=a_2+K$, we get by using \eqref{portorico},\eqref{paris},    
		\begin{align}\label{koivu1}
		2|K|\geq\abs{\xi_i}&\geq \abs{K}-3\bar\beta L_2|\bar x-\bar y|^{\bar\beta-1}\geq \frac{L_2}{4} \bar\beta \abs{\bar x-\bar y}^{\bar\beta-1}.
		\end{align}

\noindent{\bf Step 2.}		We write the  viscosity inequalities
		\begin{equation*}
		\begin{split}
	(L_1(\bar t-t_0)+b) -\tilde f(\bar x,\bar t)&\leq |\xi_1|^\gamma\left[ \tr (X+L_1I)+(p-2)\dfrac{\left\langle(X+L_1I) \xi_1, \xi_1\right\rangle}{|\xi_1|^2}\right]\\
		b-\tilde f(\bar y,\bar t)&\geq |\xi_2|^\gamma\left[ \tr(Y-L_1I)+(p-2) \dfrac{\left\langle(Y-L_1I) \xi_2, \xi_2\right\rangle}{|\xi_2|^2}\right].
		\end{split}
		\end{equation*}
	For $\eta \neq 0$, denote   $\hat \eta=\dfrac{\eta}{|\eta |}$ and  $\A(\eta):= I+(p-2)\hat\eta\otimes \hat\eta$.
		Assume that $|\xi_1|\geq |\xi_2|$ (the other case can be treated similarly).
Adding the two inequalities and using  that $|\bar t-t_0|\leq 2$, we get  
		\begin{align}\label{gregory1}
		-(2L_1+2||\tilde f||_{L^\infty(Q_1)} )|\xi_1|^{-\gamma}\leq  &\underbrace{\tr (\A(\xi_1)(X-Y))}_{(i_1)}\nonumber	+\underbrace{ \tr ((\A(\xi_1)-\A(\xi_2))Y)}_{(i_2)}\nonumber\\
		&+\underbrace{(|\xi_1|^\gamma-|\xi_2|^\gamma)\tr(\A(\xi_2)Y)|\xi_1|^{-\gamma}}_{(i_3)}\nonumber\\
		&+\underbrace{L_1\big[\tr (\A(\xi_1))+(|\xi_2||\xi_1|^{-1})^\gamma\tr (\A(\xi_2))}_{(i_4)} \big].
		\end{align}
		
We start with an estimate  for ($i_1$).  Notice   $X-Y\leq 0$   (this follows from  \eqref{maineq1}) and that  at least one of the eigenvalues of $X-Y$ is negative and smaller than $8L_2\bar\beta\abs{\bar x-\bar y}^{\bar\beta-2}\left(\frac{\bar\beta-1}{3-\bar\beta}\right)$.
Using that the eigenvalues of $\A(\xi_1)$ belong to $[\min(1, p-1), \max(1, p-1)]$ , we get
\begin{align*}  
\tr(\A(\xi_1) (X-Y))&\leq \sum_i \lambda_i(\A(\xi_1))\lambda_i(X-Y)\leq  8L_2\bar\beta\abs{\bar x-\bar y}^{\bar\beta-2}\left(\frac{\bar\beta-1}{3-\bar\beta}\right)\min(1,p-1).
		\end{align*}
We estimate  ($i_2$) by 
\begin{align*}
		\tr( (\A(\xi_1)-\A(\xi_2)) Y)
		&\leq 2n\abs{p-2}\norm{Y}|\hat \xi_1-\hat\xi_2|.
		\end{align*}
		Using that $|\xi_1-\xi_2|\leq  L_1$ and  the estimate \eqref{koivu1}, we get 
		\begin{equation*}
		\begin{split}
		\abs{\hat \xi_1-\hat \xi_2}
\le \max\left( \frac{\abs{\xi_2- \xi_1}}{\abs{\xi_2}},\frac{ \abs{\xi_2- \xi_1}}{\abs{\xi_1}}\right)\le \frac {4  L_1}{ \bar\beta L_2 \abs{\bar x-\bar y}^{\bar\beta-1}}.
		\end{split}
		\end{equation*}
	Recalling \eqref{gilout}, it follows that 
	\begin{align*}
	|\tr( (\A(\xi_1)-\A(\xi_2)) Y)|&\leq 32n\abs{p-2}L_1 \abs{\bar x-\bar y}^{-1}.
	 \end{align*}
	 Next we estimate the term ($i_3$). We have $|\xi_2|/|\xi_1|\leq 2$ and $|\xi_1-\xi_2|\leq L_1$.  Using  the mean value theorem and the estimate \eqref{koivu1},  we get that  	
	\begin{align*}
	|\xi_1|^{-\gamma} ||\xi_1|^\gamma-|\xi_2|^\gamma|
	  &\leq |\xi_1-\xi_2|^{-\gamma} |\xi_2|^{\gamma} \leq L_1^{-\gamma} \left(\frac 14 L_2 \bar\beta \abs{\bar x-\bar y}^{\bar\beta-1}\right)^{\gamma}.
	\end{align*}
	
We obtain
	\begin{align}\label{nousa}
	   |\xi_1|^{-\gamma}||\xi_1|^\gamma -|\xi_2|^\gamma|| \tr(\A(\xi_2)Y)|\leq 16nL_2^{1+\gamma}\bar\beta^{1+\gamma}\abs{\bar x-\bar y}^{\bar\beta-2+\gamma(\bar\beta-1)}(1+|p-2|)L_1^{-\gamma} 
	 \end{align}
We estimate ($i_4$) by using the estimate \eqref{koivu1}, and get 
		$$ L_1(\tr(\A(\xi_1)+(|\xi_2||\xi_1|^{-1})^\gamma\tr(\A(\xi_2)))\leq  3L_1n\max(1, p-1).$$
Finally, we gather the previous estimates and plug them into \eqref{gregory1}. We get 
		\begin{align*}
		0&\leq 2(L_1 + \norm{\tilde f}_{L^\infty(Q_1)})(2M)^{-\gamma}+3L_1n\max(1, p-1)\\
		& \quad+8\min(1, p-1) L_2\bar\beta\abs{\bar x-\bar y}^{\bar\beta-2}\left(\frac{\bar\beta-1}{3-\bar\beta}\right)+32 n\abs{p-2}L_1 \abs{\bar x-\bar y}^{-1}\\
		&\quad+  16nL_2^{\gamma+1}\bar\beta^{\gamma+1}\abs{\bar x-\bar y}^{\bar\beta-2+\gamma(\bar\beta-1)}(1+|p-2|)L_1^{-\gamma}. 
		\end{align*}
		Denote  $H:=\min(1, p-1)L_2\dfrac{\bar\beta(1-\bar\beta)}{3-\bar\beta}|\bar x-\bar y|^{\bar\beta-2}$. Using the definition of $L_2$, we have 
		 \begin{align*}
		H &\geq \max \Big( 32 n|p-2|L_1|\bar x-\bar y|^{-1},3n L_1\max(1, p-1),\\
		& \quad 2(L_1+\norm{\tilde f}_{L^\infty(Q_1)})(2M)^{-\gamma},16nL_2^{\gamma+1}\bar\beta^{\gamma+1}\abs{\bar x-\bar y}^{\bar\beta-2+\gamma(\bar\beta-1)}(1+|p-2|)L_1^{-\gamma}\Big).
				 \end{align*}
		 It follows that 
		 $$0\leq4\min(1, p-1) L_2\bar\beta\abs{\bar x-\bar y}^{\bar\beta-2}\left(\frac{\bar\beta-1}{3-\bar\beta}\right)<0$$
and we get a contradiction. Hence,  $\Phi(x,y,t)\leq 0$ in $Q_1$.
We conclude the proof by using that for any  $x_0,y_0\in B_{7/8}$ and $t_0\in (-(7/8)^2, 0]$, we have $\Phi(x_0,y_0, t_0)\leq 0$ so that  we get
		\[
		| w(x_0, t_0)- w(y_0, t_0)|\leq L_2|x_0-y_0|^{\bar\beta}.\]
		Using that $L_2\leq 1$ if $\norm{w}_{L^\infty(Q_1)}\leq \eta_0$ and $\norm{\tilde f}_{L^\infty(Q_1)}\leq \eta_0$, we get the desired estimate.
\end{proof}
\subsection{Proof of Lemma \ref{toulouse}}

 We  fix $x_0, y_0\in B_{3/4}$, $t_0\in (-(3/4)^2,0)$. Let
 \begin{align*}
 \nu=1-\gamma\bar\beta/2\in (1,2),
 \end{align*}
 where $\bar \beta$ is given by the previous lemma. Define
 \[
		\begin{split}
		\vp(s)=
		\begin{cases}
		s-s^{\nu}\kappa_0& 0\le s\le s_1:=(\frac 1 {\nu\kappa_0})^{1/(\nu-1)}  \\
		\vp(s_1)& \text{for}\ s\geq s_1,
		\end{cases}
		\end{split}
		\]
		where $\kappa_0>0$ is taken so that  $s_1>2 $ and $\nu \kappa_0s_1^{\nu-1}\leq 1/4$. With these choices we have $\varphi'(s)\in  [\frac34,1]$  and $\varphi''(s)<0$ when $s\in (0,2]$.
 Let
 \begin{align*}
  L_1&=4^5\norm{w}_{L^\infty(Q_1)}=C\norm{w}_{L^\infty(Q_1),}\\
 L_2&=L_1+\dfrac{32n\max(1, p-1)L_1}{\nu(\nu-1)\kappa_0\min(1, p-1)}+\dfrac{32(L_1+\norm{\tilde f}_{L^\infty(Q_1)})M^{-\gamma}}{\nu(\nu-1)\kappa_0\min(1, p-1)}\\
 & \quad +\left(\dfrac{64n(1+4\kappa_0)(1+|p-2|)}{\nu(\nu-1)\kappa_0\min(1, p-1)}\right)^{-1/\gamma}\sqrt{L_1}+\dfrac{512n|p-2|(2+4\kappa_0)}{\nu(\nu-1)\kappa_0\min(1, p-1)}\sqrt{L_1}\\
 & \leq \frac{1}{4\eta_1}(\norm{w}_{L^\infty(Q_1)}+\norm{\tilde f}_{L^\infty(Q_1)})+\frac12 \frac{\sqrt{\norm{w}_{L^\infty(Q_1)}}}{\sqrt{\eta_1}}.
 \end{align*}
 We consider the function
		\begin{align*}
		\Phi(x, y,t):=w(x,t)-w(y,t)-L_2\vp(\abs{x-y})-\frac {L_1}{2}\abs{x-x_0}^2-\frac {L_1}{2}\abs{y-y_0}^2-\frac{ L_1}{ 2} (t-t_0)^2.
		\end{align*}
We want to show that $\Phi(x, y,t)\leq 0$ for $(x,y)\in \overline{B_1}\times\overline{ B_1}$ and $t\in [-1,0]$.  We proceed by contradiction assuming that
		$\Phi$ has a positive
		maximum at some point $(\bar x, \bar y,\bar t)\in \bar B_1\times \bar B_1\times [-1,0]$ and  we aim to get a contradiction.
		From the positivity of the maximum, we have $\bar x\neq \bar y$ and
		\begin{align*}
|\bar y- y_0|,\, |\bar x-x_0|,\, |\bar t-t_0|\leq \sqrt{4\norm{w}_{L^\infty(Q_1)}/L_1}=\sqrt{4/C}\leq 1/8.
		\end{align*}
It follows that $\bar x$ and $\bar y$ are in $B_{7/8}$ and $\bar t\in (-(7/8)^2,0)$. \\
From the Hölder regularity of $w$ and the hypothesis that $\norm{w}_{L^\infty(Q_1)}\leq\eta_1\leq\eta_0$,  we have
\begin{align}\label{coimbre}
L_1|\bar x-x_0|^2,\quad L_1 |\bar y-y_0|^2\leq 2|\bar x-\bar y|^{\bar\beta}.
\end{align}	
The  Jensen-Ishii's lemma gives the existence of
		\[
		\begin{split}
		&(b+L_1(\bar t-t_0),a_1,X+L_1I)\in \ol P^{2,+} w(\bar x,\bar t),\\ &(b,a_2,Y-L_1I)\in \ol P^{2,-}w(\bar y,\bar t),
		\end{split}
		\]
where\[
		\begin{split}
		a_1=L_2\vp'(|\bar x-\bar y|) \frac{\bar x-\bar y}{\abs{\bar x-\bar y}}+L_1(\bar x-x_0),\qquad
		a_2=L_2\vp'(|\bar x-\bar y|) \frac{\bar x-\bar y}{\abs{\bar x-\bar y}}-L_1(\bar y-y_0).
		\end{split}
		\]
Using that $\vp'\geq \frac34$ and $L_2\geq L_1$, we have
		\[
	2L_2\geq	\abs{a_1},\abs{a_2}\geq L_2\varphi'(|\bar x-\bar y|) - L_1\abs{\bar x-x_0}\ge 3/4 L_2-L_1/8\geq  \frac{L_2}{2}.
		\]
		Also, by  Jensen-Ishii's lemma, for any $\tau>0$, we can take $X, Y\in \mathcal{S}^n$ such that 
		\begin{equation}\label{matriceineq1}
		- \big[\tau+2\norm{Z}\big] \begin{pmatrix}
		I&0\\
		0&I 
		\end{pmatrix}\leq
		\begin{pmatrix}
		X&0\\
		0&-Y 
		\end{pmatrix}
		\end{equation}
		and
		\begin{equation}\label{matineq2}
			\begin{pmatrix}
		X&0\\
		0&-Y 
		\end{pmatrix}
		\le 
		\begin{pmatrix}
		Z&-Z\\
		-Z&Z 
		\end{pmatrix}
		+\frac2\tau \begin{pmatrix}
		Z^2&-Z^2\\
		-Z^2&Z^2 
		\end{pmatrix},
		\end{equation}
		where 
		\begin{align*}	
		Z=L_2\vp''(|\bar x-\bar y|) \frac{\bar x-\bar y}{\abs{\bar x-\bar y}}\otimes \frac{\bar x-\bar y}{\abs{\bar x-\bar y}} +\frac{L_2\vp'(|\bar x-\bar y|)}{\abs{\bar x-\bar y}}\Bigg( I- \frac{\bar x-\bar y}{\abs{\bar x-\bar y}}\otimes \frac{\bar x-\bar y}{\abs{\bar x-\bar y}}\Bigg)
		\end{align*}	
		and 
		\begin{align*}	
		Z^2=
		\frac{L_2^2(\vp'(|\bar x-\bar y|))^2}{\abs{\bar x-\bar y}^2}\Bigg( I- \frac{\bar x-\bar y}{\abs{\bar x-\bar y}}\otimes \frac{\bar x-\bar y}{\abs{\bar x-\bar y}}\Bigg)+L_2^2(\vp''(|\bar x-\bar y|))^2 \frac{\bar x-\bar y}{\abs{\bar x-\bar y}}\otimes \frac{\bar x-\bar y}{\abs{\bar x-\bar y}}.
\end{align*}	
We notice that
		\begin{align}\label{lilou}
		&\norm{Z}\leq L_2 \frac{\vp'(|\bar x-\bar y|)}{|\bar x-\bar y|},\qquad \norm{Z^2}\leq L_2^2\left(|\vp''(|\bar x-\bar y|)|+\dfrac{|\vp'(|\bar x-\bar y|)|}{|\bar x-\bar y|}\right)^2,
		\end{align}
		and for $\xi=\frac{\bar x-\bar y}{\abs{\bar x-\bar y}}$, we have
	
				\begin{equation*}
		\langle Z\xi,\xi\rangle=L_2\vp''(|\bar x-\bar y|)<0, \qquad\langle Z^2\xi,\xi\rangle=L_2^2(\vp''(|\bar x-\bar y|))^2.
		\end{equation*}
		We choose  $\tau=4L_2\left(|\vp''(|\bar x-\bar y|)|+\dfrac{|\vp'(|\bar x-\bar y|)|}{|\bar x-\bar y|}\right)$ and get that for $\xi=\frac{\bar x-\bar y}{\abs{\bar x-\bar y}}$,
			\begin{align}\label{mercit}
		\langle Z\xi,\xi\rangle +\frac2\tau \langle Z^2\xi,\xi\rangle=L_2\left(\vp''(|\bar x-\bar y|)+\frac2\tau L_2(\vp''(|\bar x-\bar y|))^2\right)\leq \dfrac{L_2}{2}\vp''(|\bar x-\bar y|).
		\end{align}
		From the inequalities \eqref{matriceineq1} and \eqref{matineq2}, we deduce that $X- Y\leq 0$ and $\norm{X},\norm{Y}\leq 2\norm{Z}+\tau$. Moreover,  applying the matrix inequality \eqref{matineq2} to the vector $(\xi,-\xi)$ where $\xi:=\frac{\bar x-\bar y}{|\bar x-\bar y|}$  and using \eqref{mercit}, 
		we get
		\begin{align}\label{camille}
		\langle (X-Y) \xi, \xi\rangle\leq 4\left(\langle Z\xi,\xi\rangle+\frac2\tau\langle Z^2\xi,\xi\rangle\right)\leq 2 L_2\vp''(|\bar x-\bar y|)<0.
		\end{align}
Hence, at least one of the eigenvalue of $X-Y$ is negative and smaller than $2 L_2\vp''(|\bar x-\bar y|)$. Now, setting $\xi_1=a_1+K$, $\xi_2=a_2+K$ and  using that for $\norm{w}_{L^\infty(Q_1)}, \norm{\tilde f}_{L^\infty(Q_1)}\leq \eta_1$  we have $L_2\leq 1$, it holds
		\begin{align}\label{koivu}
		2|K|\geq \abs{\xi_i}\geq|K|- \abs{a_i}\geq \frac{\abs{a_1}}{2}\geq \frac{L_2}{4}.
		\end{align}
		Writing the  viscosity inequalities and adding them, we end up with
					\begin{align}\label{gregory1f}
		-2|\xi_1|^{-\gamma}(L_1+||\tilde f||_{L^\infty(Q_1)}) \leq  &\underbrace{\tr (\A(\xi_1)(X-Y))}_{(I)}		+\underbrace{ \tr ((\A(\xi_1)-\A(\xi_2))Y)}_{(II)}\nonumber\\
		&+\underbrace{|\xi_1|^{-\gamma}(|\xi_1|^\gamma-|\xi_2|^\gamma)\tr(\A(\xi_2)Y)}_{(III)}\nonumber\\
		&+\underbrace{L_1\big[\tr (\A(\xi_1))+(|\xi_2||\xi_1|^{-1})^\gamma\tr (\A(\xi_2))}_{(IV)} \big].\nonumber
		\end{align}
Next we estime these terms separately. Using \eqref{camille}, we estimate 
		\begin{align*}  
		\tr(\A(\xi_1) (X-Y))\leq \sum_i \lambda_i(\A(\xi_1))\lambda_i(X-Y)\leq- 2\min(1, p-1) L_2 \nu(\nu-1)\kappa_0|\bar x-\bar y|^{\nu-2}.
		\end{align*}
		We estimate $(II)$ by
		$\tr( (\A(\xi_1)-\A(\xi_2)) Y)
		\leq 2n\abs{p-2}\norm{Y}|\ol \xi_1-\ol\xi_2|$.
Using \eqref{coimbre}, we have
		\begin{equation*}
		\begin{split}
		\abs{\ol \xi_1-\ol \xi_2}=
		\abs{\frac{\xi_1}{\abs {\xi_1}}-\frac{\xi_2}{\abs {\xi_2}}}
\le \max\left( \frac{\abs{\xi_2- \xi_1}}{\abs{\xi_2}},\frac{ \abs{\xi_2- \xi_1}}{\abs{\xi_1}}\right)\le \dfrac{16\sqrt{L_1}|\bar x-\bar y|^{\bar\beta/2}}{L_2}\
		\end{split}
		\end{equation*}
		where we used \eqref{koivu}.
		Using  \eqref{matriceineq1}--\eqref{lilou}, we have
		\begin{equation*}
		\norm{Y}=\max_{\ol \xi} |\langle Y\ol \xi, \ol \xi\rangle|
		\le 2 |\langle Z\ol \xi,\ol \xi \rangle|+\frac4\tau|\langle Z^2\ol \xi,\ol \xi \rangle| \leq 4L_2\left( |\bar x-\bar y|^{-1}+ \nu(\nu-1)\kappa_0|\bar x-\bar y|^{\nu-2}\right).
		\end{equation*}
	We get	
	\begin{align*}
	 (II)&\leq 128n\abs{p-2}\sqrt{L_1}|\bar x-\bar y|^{\bar\beta/2}\left( |\bar x-\bar y|^{-1}+ \nu(\nu-1)\kappa_0|\bar x-\bar y|^{\nu-2}\right).
	 \end{align*}
The mean value theorem and the estimate \eqref{koivu} imply that
\begin{align*}
|\xi_1|^{-\gamma}	 ||\xi_1|^\gamma-|\xi_2|^\gamma|&\leq |\xi_1-\xi_2|^{-\gamma}|\xi_2|^{\gamma}\leq 4(\sqrt{L_1}|\bar x-\bar y|^{\bar\beta/2})^{-\gamma}L_2^{\gamma}.
\end{align*}
Consequently, it holds
\begin{align}
(III)\leq  4nL_2\left( |\bar x-\bar y|^{-1}+ \nu(\nu-1)\kappa_0|\bar x-\bar y|^{\nu-2}\right)(1+|p-2|)4(\sqrt{L_1}|\bar x-\bar y|^{\bar\beta/2})^{-\gamma}L_2^{\gamma}.
	 \end{align}
The last term (IV) is easy to estimate by $$ (IV)\leq 2n L_1n\max(1, p-1).$$
	Summing up all the estimates,  it holds
		\begin{align*}
		0&\leq 2(2M)^{-\gamma}(L_1+\norm{\tilde f}_{L^\infty(Q_1)})+2n L_1\max(1, p-1)\\
		 &\quad+  16nL_2^{1+\gamma}\left( \abs{\bar x-\bar y}^{-1}+ \nu(\nu-1)\kappa_0|\bar x-\bar y|^{\nu-2}\right)(1+|p-2|)(\sqrt{L_1}|\bar x-\bar y|^{\bar\beta/2})^{-\gamma}\\
		 &\quad
	+ 128n\abs{p-2}\sqrt{L_1}|\bar x-\bar y|^{\bar\beta/2}\left( \abs{\bar x-\bar y}^{-1}+ \nu(\nu-1)\kappa_0|\bar x-\bar y|^{\nu-2}\right)\\
	&\quad- 2\min(1, p-1) L_2(\nu-1)\nu \kappa_0|\bar x-\bar y|^{\nu-2}.
	\end{align*}
	Recall that $\nu=1-\gamma\frac{\bar\beta}{ 2}\in (1,2)$ and denote  $\bar H:=\dfrac{1}{4}\min(1, p-1)L_2(\nu-1)\nu \kappa_0|\bar x-\bar y|^{\nu-2}$. Using the definition of $L_2$ and the fact that $1-\nu-\gamma\bar\beta/2\geq 0$, $1-\nu+\bar\beta/2\geq 0$, we have
		 \begin{align*} 
		\bar H&\geq		 2n L_1\max(1, p-1)\\
		\bar H&\geq 2(L_1+\norm{\tilde f}_{L^\infty(Q_1)})(2M)^{-\gamma}\\
		\bar H&\geq 16nL_2^{\gamma+1}\left( \abs{\bar x-\bar y}^{-1}+ \nu(\nu-1)\kappa_0|\bar x-\bar y|^{\nu-2}\right)(1+|p-2|)(\sqrt{L_1}|\bar x-\bar y|^{\bar\beta/2})^{-\gamma}\\
		\bar H&\geq 128n\abs{p-2}\sqrt{L_1}|\bar x-\bar y|^{\bar\beta/2}\left( \abs{\bar x-\bar y}^{-1}+ \nu(\nu-1)\kappa_0|\bar x-\bar y|^{\nu-2}\right).
		 \end{align*}
		 It follows that 
		 $$0\leq -\min(1, p-1)L_2(\nu-1)\nu \kappa_0|\bar x-\bar y|^{\nu-2}<0,$$
		 which is  a contradiction.   Hence,  $\Phi(x,y,t)\leq 0$ in $Q_1$.
		 This concludes the proof  since for any  $x_0,y_0\in B_{3/4}$ and $t_0\in (-(3/4)^2, 0]$, we have $\Phi(x_0,y_0, t_0)\leq 0$ and  we get
		\[
		|w(x_0, t_0)-w(y_0, t_0)|\leq L_2|x_0-y_0|.
		\]
		Using that $L_2\leq 1$ if $\norm{u}_{L^\infty(Q_1)}\leq \eta_1$ and $\norm{\tilde f}_{L^\infty(Q_1)}\leq \eta_1$, we get the desired estimate.\qed
\begin{remark}
If one could adapt the result of  Wang \cite{wangyu} (see also the work of Savin \cite{savin} in the elliptic case) and prove that small perturbation of smooth  solutions to  some uniformly parabolic equation with a small enough continuous source term are  locally $C^{1, \alpha}$, then the proof will proceed without those Lipschitz estimates. This was done in \cite{silva19} for equation 
$$\partial_t u -F(x,t,D^2 u)=f(x,t),$$ where the operator is uniformly parabolic, and the source term is continuous. A possible generalization of \cite{silva19} in case of singular or degenerate operators remains to be done.
\end{remark}

\end{document}